\theoremstyle{lemma}
\newtheorem{theorem}{Theorem}[section]
\newtheorem{lemma}[theorem]{Lemma}
\theoremstyle{definition}
\newtheorem{definition}[theorem]{Definition}
\newtheorem{proposition}[theorem]{Proposition}
\newtheorem{corollary}[theorem]{Corollary}
\theoremstyle{remark}
\theoremstyle{notation}
\theoremstyle{claim}
\numberwithin{equation}{section}
\begin{document}

\title[Invariant Ergodic measures ]{Invariant ergodic  measures and the classification of crossed product $C^\ast$-algebras }

\author{Xin Ma}
\email{dongodel@math.tamu.edu}
\address{Department of Mathematics,
         Texas A\&M University,
         College Station, TX 77843}

\subjclass[2010]{37B05, 46L35}

\date{June 8, 2018.}


\keywords{Dynamical comparison, Almost finiteness, The small boundary property, Toms-Winter conjecture, Classification of $C^\ast$-algebras}


\begin{abstract}
Let $\alpha: G\curvearrowright X$ be a minimal free continuous action of an infinite countable amenable group on an infinite compact metrizable space. In this paper, under the hypothesis that the invariant ergodic probability Borel measure space $E_G(X)$ is compact and zero-dimensional, we show that the action $\alpha$ has the small boundary property. This partially answers an open problem in dynamical systems that asks whether a minimal free action of an amenable group has the small boundary property if its space $M_G(X)$ of invariant Borel probability measures forms a Bauer simplex. In addition, under the same hypothesis, we show that dynamical comparison implies almost finiteness, which was shown by Kerr to imply that the crossed product is $\mathcal{Z}$-stable.  Finally, we discuss some rank properties and provide two classifiability results for crossed products, one of which is based on the work of Elliott and Niu.

\end{abstract}

\maketitle

\section{Introduction}
Crossed products of the form $C(X)\rtimes_r G$ arising from topological dynamical systems, say from $(X, G,
\alpha)$ for a countable discrete group $G$, an infinite compact metrizable space $X$ and a continuous action
$\alpha$, have long been an important source of examples and motivation for the classification of separable nuclear
$C^\ast$-algebras by the Elliott invariant, that is, ordered K-theory paired with traces. The origin of the classification programme of separable nuclear $C^*$-algebras by this invariant dates back to Elliott's work on the classification of AF-algebras \cite{El}. Then in the late 1980's, Elliott \cite{E} extended this result to the classification of A$\mathbb{T}$-algebras with real rank zero. We refer the reader to the survey papers \cite{Ro} and \cite{Win-ICM} for general background on the classification programme for separable nuclear $C^*$-algebras. Nowadays, in order to classify a certain
class of separable nuclear $C^\ast$-algebras it is often sufficient to show that the algebras in the class have certain regularity
properties such as finite nuclear dimension or $\mathcal{Z}$-stability. In 2008 Toms and Winter conjectured that
the three properties of strict comparison, finite nuclear dimension, and $\mathcal{Z}$-absorption are equivalent
for unital separable simple infinite-dimensional nuclear $C^\ast$-algebras (see \cite{W-Z}, for example). As a
result of work of several authors, this conjecture, known as the Toms-Winter conjecture, has been fully confirmed
under the hypothesis that the extreme tracial states form a compact set with finite covering dimension (see \cite{B-B-S-T-W-W}, \cite{K-R}, \cite{M-Sato}, \cite{M-S}, \cite{R}, \cite{S}, \cite{S-W-W}, \cite{T-W-W} \cite{Winter} and \cite{W}).

If we restrict to crossed products one would like to find properties of the dynamical system that will lead us to classification results. Motivated by this idea, one can
also formulate dynamical properties which parallel the three regularity properties in the $C^\ast$-setting of the Toms-Winter conjecture. A well-known dynamical analogue of strict comparison in  the $C^\ast$-setting is dynamical comparison, which was
discussed in \cite{B} and \cite{D}. An analogue of the nuclear dimension is the Rokhlin dimension introduced by Hirshberg, Winter and Zacharias \cite{H-W-Z} for actions of finite groups and $\mathbb{Z}$. Then Szab\'{o} \cite{Sz} and Szab\'{o}, Wu and Zacharias \cite{S-W-Z} generalized the definition of Rokhlin dimension for actions of $\mathbb{Z}^d$ and virtually nilpotent groups, respectively. On the other hand, the notions of amenability dimension and dynamical asymptotic dimension were introduced by Guentner, Willett and Yu \cite{G-W-Y}. Inspired by the work of Guentner, Willett and Yu, Kerr \cite{D} then introduced the tower dimension. All of these dimensions can be regarded as dynamical analogues of nuclear dimension because if one of above dimensions of an action is finite then the corresponding crossed product has finite nuclear dimension under the assumption that the covering dimension of the space $X$ is finite.

Kerr \cite{D} also introduced a notion called almost finiteness, which is an analogue of $\mathcal{Z}$-stability in the dynamical setting.  His definition in fact generalized the original notion defined in \cite{Matui} for a second countable \'{e}tale groupoid whose unit space is compact and totally disconnected. However,  it is proved in \cite{D} that this new definition of almost finiteness implies $\mathcal{Z}$-stability of the crossed product.  In addition, Kerr \cite{D} also considered the following triad of properties  for the free minimal system  $(X, G,
\alpha)$:
\begin{enumerate}[label=(\roman*)]
\item finite tower dimension;

\item almost finiteness;

\item dynamical comparison.
\end{enumerate}
Then results mentioned above suggest that these three notions indeed perform as a good dynamical analogues of
their Toms-Winter counterparts. Thus the relationship among them seems well worth further investigation not only on its own interest but also for the classification of crossed products.
Actually, it has been shown that (i)$\Rightarrow$(ii)$\Leftrightarrow$(iii) in \cite{D} in the case that the set
$E_G(X)$ of ergodic $G$-invariant Borel probability measures and covering dimension of $X$ are both finite. In \cite{D} it is also mentioned that the examples in \cite{G-K} satisfy (i) but not (ii).

Motivated by \cite{D}, we focus in the present paper on the relationship between (ii) and (iii). We show  that (ii)$\Leftrightarrow$(iii) still holds in the more general case that $E_G(X)$ is compact and zero-dimensional. We remark that (i)$\Rightarrow$(ii)$\Leftrightarrow$(iii) has been proved  without any assumption on $E_G(X)$ in ongoing work of Kerr and Szab\'{o} if the covering dimension of $X$ is finite.

Another important property in dynamical systems considered in this paper is the small boundary property.
The relationship between this property and the structure of $E_G(X)$ has been studied for a long time. It was
proved in \cite{L-W} and \cite{Shub-Weiss} that if the invariant ergodic probability Borel measure space
$E_{\mathbb{Z}}(X)$ of an action $\alpha: \mathbb{Z}\curvearrowright X$ is at most countable then the action has the small boundary property. However, it is a general open problem in dynamical systems that asks whether a minimal free action of an amenable group has the small boundary property if its space $M_G(X)$ of invariant Borel probability measures forms a Bauer simplex, that is, $E_G(X)=\partial_e M_G(X)$ is compact in the weak*-topology. In addition, the small boundary property also plays an important role in the
recent work of Elliott and Niu \cite{E-N} on the crossed products induced by minimal $\mathbb{Z}$-actions. It is
proved in \cite{E-N} that if such a $\mathbb{Z}$-action has the small boundary property then the crossed product is
$\mathcal{Z}$-stable. Motivated from these two perspectives it is worth investigating when minimal free actions have the small boundary property.  In this paper, we answer the problem mentioned above in the positive under an additional assumption that $E_G(X)$ is zero-dimensional. See Theorem 1.1 below. Our method unexpectedly involves a theorem of $C^\ast$-algebras due to Lin \cite{Lin} based on work of Cuntz and
Pedersen \cite{C-P}.

We write $\mathfrak{C}$ for the class of all stably finite infinite-dimensional unital simple separable nuclear $C^\ast$-algebras satisfying UCT and having finite nuclear dimension.  From the recent progress of classification programme we know that the class $\mathfrak{C}$ is classified by the Elliott invariant (Theorem 4.4 below).  The following are our main theorems.

\begin{theorem}
Let $G$ be a countable infinite discrete amenable group, $X$ an infinite compact metrizable space and $\alpha: G\curvearrowright X$ a minimal free continuous action of $G$ on $X$. Suppose that $E_G(X)$ is compact and zero-dimensional in the weak*-topology. Then $\alpha$ has the small boundary property.
\end{theorem}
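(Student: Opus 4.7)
My strategy is $C^\ast$-algebraic, following the author's hint. Pass to the reduced crossed product $A := C(X) \rtimes_r G$. Since $G$ is amenable and $\alpha$ is free and minimal, $A$ is unital, simple, and nuclear, and restriction $\tau \mapsto \tau|_{C(X)}$ is an affine homeomorphism of $T(A)$ onto $M_G(X)$ carrying $\partial_e T(A)$ onto $E_G(X)$. The hypothesis thus says that $T(A)$ is a Bauer simplex with compact zero-dimensional extreme boundary.

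Given $x \in X$ and an open neighborhood $U$ of $x$, I would pick $h \in C(X, [0,1])$ with $h(x) = 1$ and $\operatorname{supp}(h) \subseteq U$. The plan is to produce $g \in C(X, [0,1])$ close to $h$ in sup-norm together with a level $t \in (0,1)$ such that $\mu(g^{-1}(t)) = 0$ for every $\mu \in M_G(X)$. The set $V := \{g > t\}$ will then be an open neighborhood of $x$ (for $g$ close enough to $h$ and $t$ bounded away from $1$) contained in $U$ whose boundary $\partial V \subseteq g^{-1}(t)$ is null for every invariant measure, which is exactly the small boundary property at $x$. Note that SBP for a single point (with arbitrary neighborhood) is sufficient because $x$ was arbitrary.

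To carry out this perturbation I would invoke Lin's theorem \cite{Lin}, which through the Cuntz--Pedersen equivalence — positive elements with equal traces on all of $T(A)$ differ by a norm-limit of sums of commutators $xx^\ast - x^\ast x$ — permits one, in a unital simple $C^\ast$-algebra whose trace space is a Bauer simplex with zero-dimensional extreme boundary, to approximate self-adjoint elements by ones whose images under the evaluation map into $C(\partial_e T(A), \mathbb{R})$ are locally constant. Applied to $h$ viewed in $A$, this produces $b \in A_{sa}$ close to $h$ with $\widehat{b}$ locally constant on $E_G(X)$; by integration along the unique (Bauer) ergodic decomposition, this local constancy transfers to a uniform spectral gap of the distribution functions $t \mapsto \mu(\{b \leq t\})$ across all $\mu \in M_G(X)$, yielding the desired $t$.

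The chief obstacle is that Lin's construction naturally lives in the noncommutative crossed product $A$, whereas SBP is a topological-measure-theoretic property of $X$, so the perturbation must be realized inside $C(X)$. I would bridge this gap using the canonical conditional expectation $E \colon A \to C(X)$ (killing the nontrivial $u_g$-components), which preserves each tracial state $\tau_\mu$ arising from $\mu \in M_G(X)$, and then further refine $E(b)$ inside $C(X)$ by a secondary spectral approximation that again exploits the zero-dimensionality of $E_G(X)$; alternatively, one might extract a direct commutative version of Lin's theorem working only in the order-unit space $(C(X)_{sa},1)$ paired against $M_G(X)$. This commutative reduction, and the verification that the final level $t$ remains in a gap of every $\mu \in M_G(X)$ after the pullback to $C(X)$, will be the technically delicate step of the argument.
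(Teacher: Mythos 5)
Your strategy differs fundamentally from the paper's, and it has a gap at its central step. You propose to realize the small boundary at $x$ as a level set $g^{-1}(t)$ of a continuous function, and to find a level $t$ that is $\mu$-null simultaneously for all $\mu\in M_G(X)$ by making the evaluation $\tau\mapsto\tau(b)$ locally constant on $E_G(X)$ via Lin's theorem and Cuntz--Pedersen equivalence. But local constancy of the \emph{first moment} $\tau_\mu(b)=\int_X E(b)\,d\mu$ carries no information about the level sets $\mu\bigl(E(b)^{-1}(t)\bigr)$: two measures with the same integral of $E(b)$ can push forward under $E(b)$ to entirely different measures on $\mathbb{R}$, with atoms at arbitrary locations. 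Since $E_G(X)$ may be uncountable (it is only assumed compact and zero-dimensional), the set of bad levels $\bigcup_{\mu\in E_G(X)}\{t\colon\mu(g^{-1}(t))>0\}$ need not be countable --- this is precisely why the classical cardinality argument of Lindenstrauss--Weiss and Shub--Weiss stops at countable $E_G(X)$ --- and nothing in your argument rules out that it is all of $(0,1)$. The assertion that ``local constancy transfers to a uniform spectral gap of the distribution functions'' is exactly the missing content; it is not a consequence of Lin's theorem, which controls traces, not spectral distributions. (A secondary issue: Cuntz--Pedersen equivalence does not give norm closeness of $b$ to $h$, so even the claim that $V=\{g>t\}$ is a neighborhood of $x$ contained in $U$ is not secured.)

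The paper avoids level sets altogether. Lin's theorem does enter, but only in Lemma 3.1, where first-moment control genuinely suffices: given a clopen partition $E_G(X)=\bigsqcup_j W_j$, one produces functions in $C(X)$ whose integrals are near $1$ on $W_j$ and near $0$ on the other pieces, makes them approximately orthogonal via the Toms--White--Winter lemma, and takes superlevel sets to obtain pairwise \emph{disjoint compact sets} $K_j\subset X$ with $\mu(K_j)>1-\epsilon$ for $\mu\in W_j$. Combining this with Ornstein--Weiss tilings for each ergodic measure, compactness of $E_G(X)$, and zero-dimensionality (to get a disjoint closed refinement of the resulting open cover of $E_G(X)$), the proof of Theorem 3.4 yields a single open castle of arbitrarily small tower diameter covering $X$ up to measure $\epsilon$ for every invariant measure (almost finiteness in measure). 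Proposition 3.8 then converts such disjoint small-diameter covers into the small boundary property by a nested construction $U=\bigcup_n U_n$ with $\partial U\subset\overline{Z_k}$ and $\mu(\overline{Z_k})<1/k$ for all $k$; the small-boundary set is an increasing union, not a level set. To rescue your route you would need a new mechanism for producing a common null level over an uncountable family of measures, and the proposal as written does not contain one.
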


\begin{theorem}
Let $G$ be a countable infinite discrete amenable group, $X$ an infinite compact metrizable space and $\alpha: G\curvearrowright X$ a minimal free continuous action of $G$ on $X$. Suppose that $E_G(X)$ is compact and zero-dimensional in the weak*-topology and $\alpha$ has dynamical $m$-comparison for some $m\in \mathbb{N}$. Then $\alpha$ is almost finite and thus the crossed product $C(X)\rtimes_r G$ is $\mathcal{Z}$-stable and belongs to the class $\mathfrak{C}$.
\end{theorem}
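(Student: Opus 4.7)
The strategy is to chain the three implications \emph{dynamical $m$-comparison} $\Rightarrow$ \emph{almost finiteness} $\Rightarrow$ \emph{$\mathcal{Z}$-stability} $\Rightarrow$ \emph{$C(X)\rtimes_r G\in \mathfrak{C}$}. Theorem 1.1 immediately supplies the small boundary property (SBP) under the standing hypothesis on $E_G(X)$, so the substantive work is the first implication. The second implication is Kerr's theorem in \cite{D}, and the third is an assembly of standard facts together with one application of the already-confirmed case of the Toms-Winter conjecture.

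For the core step, with SBP and dynamical $m$-comparison in hand, the task is to build, given a prescribed finite $K\subset G$ and $\delta>0$, an open castle whose shapes are $(K,\delta)$-invariant and whose uncovered remainder is dynamically subequivalent to a single tower level. I would model the argument on the proof of (iii)$\Rightarrow$(ii) in \cite{D}, which establishes exactly this under the stronger hypothesis that $E_G(X)$ is finite. Compactness and zero-dimensionality of $E_G(X)$ allow one to choose a clopen partition $U_1,\ldots,U_\ell$ of $E_G(X)$ fine enough that, on a preselected finite family of Borel sets arising from a rough initial tower construction, the oscillation of $\mu\mapsto \mu(A)$ across each $U_i$ is uniformly small. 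One can then treat each $U_i$ as a single ``generalized ergodic point'' and rerun the finite-$E_G(X)$ construction from \cite{D}, using SBP to take tower levels with negligible boundary and dynamical $m$-comparison to swallow the residual complement into a single level. The main obstacle is obtaining the subequivalence estimate uniformly over each $U_i$: since $U_i$ still contains a continuum of ergodic measures, the comparison has to hold simultaneously for all of them, not merely for representatives. This is precisely where zero-dimensionality is used, replacing the continuum of extreme points by a finite data set via a clopen partition with controlled oscillation, which is the only mechanism available for reducing to the finite case of \cite{D}.

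Once almost finiteness is established, \cite{D} yields that $C(X)\rtimes_r G$ is $\mathcal{Z}$-stable. For $\mathfrak{C}$-membership, the crossed product is unital because $X$ is compact, separable because $X$ is metrizable and $G$ countable, nuclear because $G$ is amenable, simple by minimality and freeness of $\alpha$ with $G$ amenable, and stably finite because amenability of $G$ supplies a $G$-invariant Borel probability measure on $X$ and hence a tracial state. The UCT follows from Tu's theorem applied to the amenable transformation groupoid. Finally, finite nuclear dimension is deduced from $\mathcal{Z}$-stability via the confirmed Toms-Winter implication cited in the introduction, using that the extreme tracial boundary of $C(X)\rtimes_r G$ is canonically homeomorphic to $E_G(X)$ and therefore compact of covering dimension zero. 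This places $C(X)\rtimes_r G$ in $\mathfrak{C}$.
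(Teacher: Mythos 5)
Your overall architecture (comparison $\Rightarrow$ almost finiteness $\Rightarrow$ $\mathcal{Z}$-stability $\Rightarrow$ membership in $\mathfrak{C}$) and the final $C^\ast$-algebraic assembly (simplicity, stable finiteness, UCT via Tu, finite nuclear dimension via the compact extreme tracial boundary and the confirmed Toms--Winter implication) match the paper. The gap is in the core dynamical step. You correctly identify the obstacle --- each clopen piece $U_i$ of $E_G(X)$ still contains a continuum of ergodic measures --- but the mechanism you propose, a clopen partition with small oscillation of $\mu\mapsto\mu(A)$ over a preselected finite family of Borel sets, does not resolve it. In the finite-$E_G(X)$ case of \cite{D} the essential point is that distinct ergodic measures are mutually singular, so the Ornstein--Weiss castles built for different ergodic measures can be supported on pairwise disjoint subsets of $X$ and hence merged into a single castle. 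To imitate this you need, for any finite disjoint family of closed subsets $W_j$ of $E_G(X)$, pairwise disjoint compact subsets $K_j$ of $X$ with $\mu(K_j)>1-\epsilon$ for all $\mu\in W_j$; this is the paper's Lemma 3.1, and it is not a softness statement: it is proved by realizing suitable strictly positive affine functions on $T(C(X)\rtimes_r G)$ as positive elements via Lin's theorem (built on Cuntz--Pedersen) and then orthogonalizing with the Toms--White--Winter lemma. Controlled oscillation over each clopen piece only gives uniformity of estimates within a piece (which the paper obtains more simply from lower semicontinuity of $\mu\mapsto\mu(O)$ and compactness); it gives no way to make the towers attached to different pieces disjoint in $X$, and without that you only obtain an $\ell$-colored tower family with $\ell$ unbounded, not a castle.

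A second, smaller gap: you say $m$-comparison is used to ``swallow the residual complement into a single level,'' but the definition of almost finiteness requires the plain relation $X\setminus\bigsqcup_{i}S_iV_i\prec\bigsqcup_{i}S_i'V_i$, whereas $m$-comparison only delivers an $(m+1)$-colored cover. The paper (following Theorem 9.2 of \cite{D}) reserves $m+1$ pairwise disjoint subsets $S'_{i,0},\dots,S'_{i,m}\subset S_i$, each of cardinality roughly $|S_i|/((m+1)n)$, compares the remainder against $\bigsqcup_i S'_{i,0}V_i$, and then re-routes the $j$-th color class into $\bigsqcup_i S'_{i,j}V_i$ via bijections $S'_{i,0}\to S'_{i,j}$ and the tower structure; this redistribution step has to be spelled out. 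Finally, note that the paper does not use the small boundary property as an input here: Theorem 1.1 is itself a byproduct of the same castle construction (via Proposition 3.8), so invoking it does not shortcut the work.
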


Combining Theorem 1.1 with Corollary 4.9 in \cite{E-N}, we have the following corollary. In this paper, however, instead of using Corollary 4.9 in \cite{E-N}, we directly verify that the crossed product under the assumption below has finite nuclear dimension and thus belongs to the class $\mathfrak{C}$.

\begin{corollary}
Let $X$ be an infinite compact metrizable space, and let $h: X\rightarrow X$ be a minimal homeomorphism. Suppose that $E_{\mathbb{Z}}(X)$ is compact and zero-dimensional in the weak*-topology. Then $C(X)\rtimes_r \mathbb{Z}$ belongs to the class $\mathfrak{C}$.
\end{corollary}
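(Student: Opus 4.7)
The plan is to verify each of the defining properties of the class $\mathfrak{C}$ --- stably finite, infinite-dimensional, unital, simple, separable, nuclear, satisfying UCT, and having finite nuclear dimension --- for $A:=C(X)\rtimes_r\mathbb{Z}$. Most of these are immediate: unitality follows from compactness of $X$; separability from metrizability of $X$ and countability of $\mathbb{Z}$; nuclearity from amenability of $\mathbb{Z}$; the UCT from the Pimsner--Voiculescu sequence applied to $C(X)$; simplicity from the fact that a minimal homeomorphism of an infinite compact metrizable space is automatically topologically free; stable finiteness from the existence of an $h$-invariant Borel probability measure (guaranteed by amenability of $\mathbb{Z}$), which induces a faithful tracial state on $A$; and infinite-dimensionality from $|X|=\infty$. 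The only non-routine property is finite nuclear dimension.

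For that, I would first apply Theorem 1.1 with $G=\mathbb{Z}$: the hypotheses that $\mathbb{Z}$ is amenable, $X$ is infinite compact metrizable, $h$ is a minimal free homeomorphism, and $E_\mathbb{Z}(X)$ is compact and zero-dimensional are precisely what the theorem requires, so $h$ has the small boundary property. This yields Kakutani--Rokhlin castles over closed bases $Y\subset X$ with $\sup_{\mu\in M_\mathbb{Z}(X)}\mu(\partial Y)$ arbitrarily small and with arbitrarily small base measure. I would then feed such a $Y$ into a Putnam-style orbit-breaking subalgebra $A_Y\subset A$, which admits an inductive limit description in terms of recursive subhomogeneous algebras built from the first-return map to $Y$. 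The small-boundary control on $Y$, combined with a Lin--Phillips-type large subalgebra argument (or the corresponding absorption technique from the proof of Theorem 1.2), should push a uniform nuclear-dimension bound from the $A_Y$ up to $A$.

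The main obstacle is that $X$ is not assumed to have finite covering dimension, so the recursive subhomogeneous building blocks of $A_Y$ need not a priori have uniformly bounded topological dimension, and the classical bounds on $\dim_{\mathrm{nuc}}(A_Y)$ expressed in terms of $\dim X$ do not apply directly. Overcoming this requires exploiting the zero-dimensionality of $E_\mathbb{Z}(X)$ through the small boundary property in a refined way, effectively replacing the role of $\dim X$ by a dimension governed by the trace simplex --- the same mechanism that drives the proof of Theorem 1.2. An alternative, less ``direct'' route would be to promote the small boundary property to $\mathcal{Z}$-stability of $A$ via Elliott--Niu and then invoke the implication that $\mathcal{Z}$-stability forces finite nuclear dimension for such algebras, but the stated aim of avoiding Corollary 4.9 of \cite{E-N} points to the self-contained dimension-counting argument through the castles supplied by Theorem 1.1.
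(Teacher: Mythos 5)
Your verification of the routine properties (unital, separable, nuclear, simple, stably finite, infinite-dimensional, UCT) is fine; in particular you are right that minimality on an infinite compact space forces freeness for $\mathbb{Z}$-actions, so Theorem 1.1 applies and gives the small boundary property. The problem is the one property you yourself single out as non-routine, finite nuclear dimension. Your primary route --- orbit-breaking subalgebras $A_Y$, recursive subhomogeneous models built from the first-return map, and a Lin--Phillips large-subalgebra argument --- stalls exactly where you say it does: without a bound on $\dim X$ the recursive subhomogeneous building blocks carry no a priori dimension bound, and ``exploiting the zero-dimensionality of $E_{\mathbb{Z}}(X)$ through the small boundary property in a refined way'' is a placeholder for an argument rather than an argument. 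As written, the finite nuclear dimension step is a genuine gap.

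The route you dismiss in your final sentence is in fact the one the paper takes, and it closes the gap with no new work. From the small boundary property, the main theorem of Elliott and Niu (a minimal homeomorphism with the small boundary property, equivalently mean dimension zero, has $\mathcal{Z}$-stable crossed product) gives $\mathcal{Z}$-stability of $A$; this is not Corollary 4.9 of that paper, which is the classification statement the author wants to avoid, so invoking it does not conflict with the ``stated aim.'' Then, because $\partial_e T(A)\cong E_{\mathbb{Z}}(X)$ is compact by hypothesis, the partial Toms--Winter implication of Bosa--Brown--Sato--Tikuisis--White--Winter upgrades $\mathcal{Z}$-stability to finite nuclear dimension (at the time of the paper this compactness hypothesis is needed; you should state it rather than asserting the implication ``for such algebras''). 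Combined with your list of routine properties, this puts $A$ in $\mathfrak{C}$. So the fix is simply to promote your ``alternative'' to the main argument and discard the castle/dimension-counting sketch.
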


Our paper is organised as follows: Section 2 collects some preliminary results. In section 3 we study dynamical $m$-comparison, $m$-almost finiteness and prove Theorem 1.1 and the dynamical part of Theorem 1.2. In section 4, we discuss some properties of $C^\ast$-algebras arising from almost finite actions and finish the proof of Theorem 1.2.

\section{Preliminaries}
In this section, we recall some terminology and definitions used in the paper.
Throughout the paper $G$ denotes a countable infinite amenable group, $X$ denotes an infinite compact metrizable
topological space and $\alpha: G\curvearrowright X$ denotes a continuous action of $G$ on $X$.  We write $M(X)$ for the convex set of all regular Borel probability measures on $X$, which is a
weak* compact subset of $C(X)^\ast$. We write $M_G(X)$ for the convex set of $G$-invariant regular Borel
probability measures on $X$, which is a weak* compact subset of $M(X)$. We write $E_G(X)$ for the set of extreme
points of $M_G(X)$, which are precisely the ergodic measures in $M_G(X)$.

Given a unital $C^\ast$-algebra $A$, we write $T(A)$ for the convex set of all tracial states on $A$. Denote by
$B$ the reduced crossed product $C^\ast$-algebra $C(X)\rtimes_r G$ arising from a free action. For every measure $\mu$ in $M_G(X)$, the
function $\tau_{\mu}$ defined on $B$ by $\tau_{\mu}(a)=\int_X E(a)\,d\mu$ is a tracial state on $B$, where $E$ is
the canonical faithful conditional expectation from $B$ onto $C(X)$.  In the converse direction, every tracial state
induces an invariant measure on $X$ by restricting to $C(X)$. Actually Theorem 15.22 in \cite{NP} shows
that the function $H: M_G(X)\rightarrow T(B)$ defined by $H(\mu)=\tau_\mu$ is an affine bijection and it is not hard to see $H$ is actually an affine homeomorphism with
respect to the weak*-topology.   Therefore, we will usually identify the spaces $M_G(X)$ and $T(B)$. In addition,
$E_G(X)$ and $\partial_eT(B)$ correspond to each other under the same map.

The \emph{chromatic number} of a family $\mathcal{C}$ of subsets of a given set is defined to be the least $d\in \mathbb{N}$ such that there is a partition of $\mathcal{C}$ into $d$ subcollections each of which is disjoint. The idea of dynamical comparison dates back to Winter in 2012 and was discussed in the papers of Buck \cite{B} and Kerr \cite{D}.  We record the version appeared in \cite{D} here.

\begin{definition}(\cite[Definition 3.1]{D})
Let $F$ be a closed subset of $X$ and $O$ an open subset of $X$. We write $F\prec_m O$ if there exists a finite collection $\mathcal{U}$ of open subsets of $X$ which cover $F$, an $s_U\in G$ for each $U\in \mathcal{U}$, and a partition $\mathcal{U}=\bigsqcup_{i=0}^m\mathcal{U}_i$ such that for each $i=0,1,\dots ,m$ the images $s_UU$ for $U\in \mathcal{U}_i$ are pairwise disjoint subsets of $O$. When $m=0$ we also write $F\prec O$.
\end{definition}

\begin{definition}(\cite[Proposition 3.3]{D})
Let $m\in \mathbb{N}$. The action $\alpha: G\curvearrowright X$ is said to have \emph{dynamical $m$-comparison} (\textit{$m$-comparison} for short) if $F\prec_m O$ for every closed set $F\subset X$ and open set $O\subset X$ satisfying  $\mu(F)<\mu(O)$ for all $\mu\in M_G(X)$. When $m=0$, we will also say that the action has \emph{dynamical comparison} (\textit{comparison} for short).
\end{definition}

\begin{definition}(\cite[Definition 4.1]{D})
A \emph{tower} is a pair $(S,V)$ consisting of a subset $V$ of $X$ and a finite subset $S$ of $G$ such that the sets $sV$ for $s\in S$ are pairwise disjoint. The set $V$ is the\emph{ base} of the tower, the set $S$ is the \emph{shape} of the tower and the sets $sV$ for $s\in S$ are the levels of the tower.  We say that the tower $(S, V)$ is \textit{open} if $V$ is open. A finite collection of towers $\{(S_i,V_i): i\in I\}$ is called a \emph{castle} if    $S_iV_i\cap S_jV_j=\emptyset$ for all $i\neq j\in I$.
\end{definition}

\begin{definition}(\cite[Definition 11.2]{D})
Let $m\in \mathbb{N}$. We say that a free action $\alpha: G\curvearrowright X$ is $m$-\emph{almost finite} if for every $n\in \mathbb{N}$, finite set $K\subset G$, and $\delta>0$ there are a finite collection $\{(S_i, \overline{V_i}): i\in I\}$ of towers with following properties:
\begin{enumerate}[label=(\roman*)]
\item $V_i$ is an open subset of $X$ for every $i\in I$;

\item $S_i$ is $(K,\delta)$-invariant for every $i\in I$;

\item $\operatorname{diam}(s\overline{V_i})<\delta$ for every $i\in I$ and $s\in S_i$ and the family $\{S_i\overline{V_i}:i\in I\}$ has chromatic number at most $m+1$;

\item there are sets $S'_i\subset S_i$ for each $i\in I$ such that $|S'_i|\leq |S_i|/n$ and $X\setminus \bigsqcup_{i\in I}S_iV_i\prec \bigsqcup_{i\in I}S'_iV_i$.
\end{enumerate}
If $m=0$, we say  $\alpha: G\curvearrowright X$ is \emph{almost finite} for short. In this case $\{(S_i,V_i):i\in I\}$ is a castle.

\end{definition}

Note that the Definition 2.4 here seems to be stronger than the definition of almost finiteness in \cite{D} in which all towers are open. However, it can be shown that they are actually equivalent. We remark that it has been proved in \cite{D} that they are equivalent when $m=0$. In general, firstly we fix a metric $d$ on $X$. Given $n\in \mathbb{N}$, finite $K\subset G$, and $\delta>0$, suppose that we have an open castle $\{(S_i, V_i): i\in I\}$ satisfying the conditions of $m$-almost finiteness above. We start from condition (iv) and write $F$ for the set $X\setminus \bigsqcup_{i\in I}S_iV_i$ for simplicity. Since $F\prec \bigsqcup_{i\in I}S'_iV_i$ holds for the original castle,
there are open subset $O_1,\dots, O_n$ of $X$ and group elements $g_1,\dots, g_n\in G$ such that $F\subset \bigcup_{i=1}^n O_n$ and $\bigsqcup_{i=1}^n g_iO_i\subset\bigsqcup_{i\in
I}S'_iV_i$. An argument of partition of unity allows us to find open subset $W_j\subset O_j$  such that
$\overline{W_j}\subset O_j$ for each $j=1,2,\dots, n$ and $\{\overline{W_j}: j=1,2,\dots, n\}$ still form a cover of $F$. This allows us to find a $\delta>0$ to define a new open subset $O'_j=\{x\in X:
d(x, X\setminus O_j)>\delta\}$ such that $\overline{W_j}\subset O_j'\subset O_j$ for each $j=1,2,\dots, n$. This implies that  $\{O'_j: j=1,2,\dots, n\}$ also form a cover of $F$ and thus there is another $\delta'>0$ such that $B(F, \delta')=\{x\in X: d(x, F)<\delta'\}\subset \bigcup_{j=1}^n O'_j$. Then by the definition of $O_j'$ and the uniformly continuity of homeomorphisms induced by $g_1^{-1},\dots, g_n^{-1}$ there is a $\gamma>0$ such
that $d(g_jx, X\setminus g_jO_j)>\gamma$ for all $x\in O_j'$ and all $j=1,2,\dots, n$. Thus one has
$d(\bigsqcup_{j=1}^n g_jO'_j, X\setminus \bigsqcup_{i\in I}S'_iV_i)\geq d(\bigsqcup_{j=1}^n g_jO'_j, X\setminus
\bigsqcup_{j=1}^ng_jO_j)\geq \gamma$.

For an $\eta>0$ and an open set $U$ we write $U^{-\eta}=\{x\in X: d(x, X\setminus U)>\eta\}$ for the open subset
of $U$ shrunken by $\eta$. Observe that for each $i\in I$, there is a $\eta_i>0$ such that
$V_i\setminus V^{-\eta_i}_i\subset B(F, \delta')$. Then by the uniform continuity one can find an $\eta>0$ such that $X\setminus \bigsqcup_{i\in
I}S_iV^{-\eta}_i\subset B(F, \delta'/2)$ by shrinking all $\eta_i$ if necessary. In addition, by the same reason one can shrink $\eta$ furthermore such that $X\setminus \bigsqcup_{i\in I}S'_iV^{-\eta}_i\subset B(X\setminus \bigsqcup_{i\in
I}S'_iV_i, \gamma/2)$. This entails that
$X\setminus \bigsqcup_{i\in I}S_iV^{-\eta}_i\subset \bigcup_{j=1}^n O'_j$ while $\bigsqcup_{j=1}^n g_jO'_j\subset
\bigsqcup_{i\in I}S'_iV^{-\eta}_i$. This verifies that the new castle $\{(S_i, \overline{V^{-\eta}_i}): i\in I\}$ satisfies the
condition (iv). In addition we see that this new castle satisfies the other conditions of $m$-almost finiteness above trivially and thus $\{(S_i, \overline{V^{-\eta}_i}): i\in I\}$ is what we want. The converse direction is trivial.

We recall the notion of central sequence algebra. Let $A$ be a separable $C^\ast$-algebra. Set
\begin{align*}
A_{\infty}=\ell^\infty(\mathbb{N},A)/\{(a_n)_n\in\ell^\infty(\mathbb{N},A): \lim_{n\rightarrow \infty}\|a_n\|=0\}.
\end{align*}
We identify $A$ with the $C^\ast$-subalgebra of $A_{\infty}$ consisting of equivalence classes of constant sequences. We call $A_{\infty}\cap A'$ the \emph{central sequence algebra} of $A$, which consists of all equivalence classes whose representatives $(x_n)_n\in \ell^\infty(\mathbb{N},A)$ satisfy $\|[x_n,a]\|\rightarrow 0$ as $n\rightarrow \infty$ for all $a\in A$. Each such representing sequence $(x_n)_n$ is called a central sequence.

The following lemma is due to Lin \cite{Lin} based on work of Cuntz and Pedersen \cite{C-P}. This lemma enables us to realize strictly positive elements of $\operatorname{Aff}(T(A))$ via positive elements of $A$.

\begin{lemma}\emph{(\cite[Theorem 9.3]{Lin})}
Let A be a simple separable unital nuclear C*-algebra such that $T(A)\neq \emptyset$ and let $f$ be a strictly positive affine continuous function on $T(A)$. Then for any $\epsilon>0$, there exists $x\in A^+$ with $f(\tau)=\tau(x)$ for all $\tau\in T(A)$ and $\|x\|\leq \|f\|+\epsilon$.
\end{lemma}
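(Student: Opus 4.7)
The plan is to realise $f$ as $\tau \mapsto \tau(x)$ by combining the Cuntz--Pedersen tracial comparison theorem with an iterative correction scheme made available by simplicity and nuclearity of $A$. Recall that Cuntz and Pedersen showed, for a unital $C^*$-algebra with $T(A) \neq \emptyset$, that two positive elements $a, b \in A^+$ satisfy $\tau(a) = \tau(b)$ for every $\tau \in T(A)$ if and only if $a - b$ lies in the norm closure of the linear span of elements of the form $cc^* - c^*c$, and that under mild regularity hypotheses the map $A^+ \to \mathrm{Aff}(T(A))^+$, $a \mapsto \hat a$, has uniformly dense image in the strictly positive cone. My first step is therefore to extract, for any tolerance $\eta > 0$, an element $a \in A^+$ with $\|\hat a - f\|_\infty < \eta$ and $\|a\|$ not much larger than $\|f\|$, using the standard density machinery of Cuntz--Pedersen together with the existence in simple unital $A$ of a strictly positive contraction whose trace is bounded away from zero.

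The second step upgrades this approximation to exact equality by a telescoping construction. Set $\delta = \min_{\tau \in T(A)} f(\tau) > 0$, which is strictly positive since $T(A)$ is weak*-compact and $f$ is strictly positive. Produce $a_1 \in A^+$ with $\|\hat a_1 - f\|_\infty < \epsilon\delta/8$ and $\|a_1\| \le \|f\| + \epsilon/4$. The residual $g_1 = f - \hat a_1$ in $\mathrm{Aff}(T(A))$ is continuous but need not be positive; using simplicity of $A$ to choose $h \in A^+$ with $\tau(h) \ge c > 0$ uniformly in $\tau$ and writing $g_1 = (g_1 + \lambda_1 \hat h) - \lambda_1 \hat h$ with $\lambda_1$ small but large enough to render the first summand strictly positive, I apply the density step to each piece. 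Iterating with geometrically decreasing tolerances produces sequences $a_n, b_n \in A^+$ with $\sum_{n \ge 2}(\|a_n\| + \|b_n\|) < \epsilon/2$ and $\sum_n (\hat a_n - \hat b_n) = f$ in $\mathrm{Aff}(T(A))$; setting $y = \sum_n a_n - \sum_n b_n \in A_{\mathrm{sa}}$ yields $\hat y = f$ with $\|y\| \le \|f\| + \epsilon$.

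Finally, since $\hat y = f$ is strictly positive and $A$ is simple nuclear, the Cuntz--Pedersen equivalence relation allows one to replace the self-adjoint element $y$ by a positive $x \in A^+$ of comparable norm satisfying $\hat x = f$; after absorbing the remaining slack by rescaling the initial tolerances, one obtains $\|x\| \le \|f\| + \epsilon$. The main obstacle is the simultaneous control of \emph{exact} trace equality and norm bound: a naive series construction inflates norms geometrically, so it is essential to track the error tolerances and the cancellation between positive and negative parts of the iteration. The passage from the self-adjoint limit $y$ to the positive representative $x$ of comparable norm is precisely where simplicity and nuclearity of $A$ are genuinely used, since it relies on strengthened forms of the Cuntz--Pedersen equivalence that are not available in general unital $C^*$-algebras.
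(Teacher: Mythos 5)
The paper offers no proof of this statement: it is Lemma 2.5, quoted directly from Lin \cite[Theorem 9.3]{Lin}, so there is no in-paper argument to measure yours against. Judged on its own terms, your proposal correctly identifies Cuntz--Pedersen theory as the engine, but it has a genuine gap at the one step that carries the entire weight of the theorem, and most of the rest of the construction is redundant.

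The gap is the passage from a self-adjoint lift of $f$ to a positive one. Your first two steps only ever produce a self-adjoint element $y$ with $\hat{y}=f$ and $\|y\|\leq\|f\|+\epsilon$; but that much already follows in one line from the Cuntz--Pedersen isometric isomorphism $A_{\mathrm{sa}}/\overline{A_0}\cong \operatorname{Aff}(T(A))$, where $A_0$ is the linear span of the self-commutators $c^\ast c-cc^\ast$ (an isometry of Banach spaces with dense range is surjective, and the quotient norm lets you pick a lift of norm at most $\|f\|+\epsilon$). So the telescoping construction buys nothing. The actual content of Lin's theorem is that the lift can be taken \emph{positive} with the same norm control, and your final paragraph disposes of this by asserting that ``the Cuntz--Pedersen equivalence relation allows one to replace the self-adjoint element $y$ by a positive $x$ of comparable norm with $\hat{x}=f$.'' That assertion \emph{is} the theorem being proved: the Cuntz--Pedersen equivalence is a relation between positive elements and provides no such replacement principle for free, so as written the argument is circular at its crucial point. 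What is actually needed is the Cuntz--Pedersen comparison theorem for simple $C^\ast$-algebras (roughly: if $a,b\in A^+$ and $\hat{a}<\hat{b}$ pointwise on $T(A)$, then $a$ is Cuntz--Pedersen equivalent to some $c\leq b$). Applying this with $a=y_-$ and $b=y_+$ --- legitimate because $\hat{y_+}-\hat{y_-}=f\geq\delta>0$ --- yields $c\leq y_+$ with $\hat{c}=\hat{y_-}$, so that $x=y_+-c\geq 0$ satisfies $\hat{x}=f$ and $\|x\|\leq\|y_+\|\leq\|y\|\leq\|f\|+\epsilon$. With that ingredient in hand your steps one and two can be deleted; without it, your argument never produces a positive element. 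Finally, you claim that nuclearity is ``genuinely used'' in the last step but never say where; if you invoke it, you should identify the precise statement that requires it.
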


The following lemma is due to Toms, White and Winter \cite{T-W-W}.

\begin{lemma}\emph{(\cite[Lemma 3.4]{T-W-W})}
Let $A$ be a separable unital C*-algebra with non-empty trace space $T(A)$. Let $T_0\subset T(A)$ be non-empty and suppose that $(e_n^{1})_n,\dots,(e_n^{L})_n$ are sequences of positive contractions in $A_+$ representing elements of $A_{\infty}\cap A'$ such that $\lim_{n\rightarrow \infty}\sup_{\tau\in T_0}|\tau(e_n^{(l)}e_n^{(l')})|=0$ for $l\neq l'$. Then there exist positive elements $\tilde{e}_n^{(l)}\leq e_n^{(l)}$ so that:
\begin{enumerate}[label=(\roman*)]
\item  $(\tilde{e}_n^{(l)})_n$ represents an element of $A_{\infty}\cap A'$;

\item  $\lim_{n\rightarrow \infty}\sup_{\tau\in T_0}|\tau(\tilde{e}_n^{(l)}-e_n^{(l)})|=0$;

\item $\tilde{e}_n^{(l)}\perp \tilde{e}_n^{(l')}$ in $A_{\infty}\cap A'$ for $l\neq l'$.
\end{enumerate}
\end{lemma}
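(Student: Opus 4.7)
The plan is to proceed by induction on $L$. The base case $L=1$ is trivial with $\tilde{e}_n^{(1)}:=e_n^{(1)}$. For the inductive step, suppose $\tilde{e}_n^{(1)},\dots,\tilde{e}_n^{(L-1)}$ have been constructed satisfying (i)--(iii) for these indices, and the task is to produce $\tilde{e}_n^{(L)}\leq e_n^{(L)}$ which is additionally central, asymptotically tracially close to $e_n^{(L)}$ on $T_0$, and norm-orthogonal in $A_\infty\cap A'$ to each previously constructed $\tilde{e}_n^{(l)}$.

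Set $b_n:=\sum_{l<L}\tilde{e}_n^{(l)}$; by the inductive orthogonality and the fact that the $\tilde{e}_n^{(l)}$ are positive contractions, the class $[(b_n)_n]$ in $A_\infty\cap A'$ is a positive contraction, so $\limsup_n\|b_n\|\leq 1$. The plan is to cut away the spectral overlap between $e_n^{(L)}$ and $b_n$ via continuous functional calculus. Concretely, fix a smooth cutoff $h_\varepsilon:[0,1]\to[0,1]$ with $h_\varepsilon(0)=1$ and $h_\varepsilon\equiv 0$ on $[\varepsilon,1]$, and take $\tilde{e}_n^{(L)}$ to be obtained from $e_n^{(L)}$ and $h_\varepsilon(b_n)$ by a symmetric sandwich or a truncated positive-part construction; the parameter $\varepsilon$ is then allowed to shrink along a diagonal subsequence in $n$.

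For condition (ii), the key computation uses the trace property to write $\tau(b_n\cdot e_n^{(L)})=\sum_{l<L}\tau(\tilde{e}_n^{(l)}e_n^{(L)})\leq\sum_{l<L}\tau(e_n^{(l)}e_n^{(L)})$, which tends to zero uniformly on $T_0$ by the hypothesis together with $\tilde{e}_n^{(l)}\leq e_n^{(l)}$. The trace loss $\tau(e_n^{(L)}-\tilde{e}_n^{(L)})$ is dominated (up to vanishing commutator errors) by $\tau(e_n^{(L)}(1-h_\varepsilon(b_n)))$, which in turn is controlled by $\tau(e_n^{(L)}b_n)/\varepsilon$, so taking $\varepsilon$ to decrease slowly with $n$ yields (ii). Centrality of $(\tilde{e}_n^{(L)})_n$ is automatic from continuity of functional calculus applied to the central sequences $(e_n^{(L)})_n$ and $(b_n)_n$.

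The main obstacle is arranging (i), (ii), and the norm-orthogonality (iii) simultaneously: conditions (i) and (iii) are in tension, since naive sandwich formulas $h_\varepsilon(b_n)^{1/2}e_n^{(L)}h_\varepsilon(b_n)^{1/2}$ do not in general satisfy $\tilde{e}_n^{(L)}\leq e_n^{(L)}$, while naive positive-part constructions $(e_n^{(L)}-b_n)_+$ do satisfy (i) but fail exact orthogonality to $b_n$ even commutatively. The resolution exploits the asymptotic commutativity of the central sequences: since $\|[e_n^{(L)},b_n]\|\to 0$, the identities of commutative functional calculus apply up to norm errors that vanish as $n\to\infty$. A diagonal argument matching the rates of $n\to\infty$ and $\varepsilon\to 0$ produces the required central sequence $(\tilde{e}_n^{(L)})_n$, closing the induction; passing to a subsequence at the end if necessary ensures the three conditions hold simultaneously.
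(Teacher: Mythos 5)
The paper does not prove this lemma; it is imported verbatim from Toms--White--Winter \cite[Lemma 3.4]{T-W-W}, so your proposal has to be measured against the argument there. Your overall architecture is the right one and matches it in spirit: cut $e_n^{(L)}$ down by a cutoff function $h_\varepsilon$ of the sum of the other elements, control the trace loss via the pointwise inequality $1-h_\varepsilon(t)\le t/\varepsilon$ together with $\sup_{\tau\in T_0}\tau(e_n^{(L)}b_n)\to 0$, get orthogonality up to an error $O(\sqrt{\varepsilon})+o_n(1)$ from asymptotic commutativity, and finish with a diagonal argument in $(\varepsilon,n)$. The trace estimates and the centrality claim are correct as written.

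The genuine gap is that you never exhibit an element that \emph{exactly} satisfies $\tilde{e}_n^{(L)}\le e_n^{(L)}$ while being asymptotically orthogonal to $b_n$ --- you correctly identify this as ``the main obstacle'' and then assert that asymptotic commutativity resolves it, which is precisely the step that needs an actual formula. Neither of the two candidates you name works: you rightly discard $h_\varepsilon(b_n)^{1/2}e_n^{(L)}h_\varepsilon(b_n)^{1/2}$, but your fallback $(e_n^{(L)}-b_n)_+$ does \emph{not} satisfy $(e-b)_+\le e$ for non-commuting positive elements (a $2\times 2$ matrix example with $e=\mathrm{diag}(1,0)$ and $b$ a rank-one projection with range off the axes already fails, since the range of $(e-b)_+$ is not contained in that of $e$), and approximate commutation only yields $(e_n^{(L)}-b_n)_+\le e_n^{(L)}+\delta_n$ with $\delta_n\to0$, which by R{\o}rdam-type lemmas upgrades to Cuntz subequivalence but not to the exact operator inequality the statement demands for every $n$. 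The missing idea is simply the sandwich taken the \emph{other} way around: $\tilde{e}_n^{(L)}:=(e_n^{(L)})^{1/2}\,h_\varepsilon(b_n)\,(e_n^{(L)})^{1/2}$, which satisfies $\tilde{e}_n^{(L)}\le (e_n^{(L)})^{1/2}\cdot 1\cdot (e_n^{(L)})^{1/2}=e_n^{(L)}$ exactly and with no commutativity assumption, is central by continuity of the functional calculus, has $\tau(e_n^{(L)}-\tilde{e}_n^{(L)})=\tau\bigl(e_n^{(L)}(1-h_\varepsilon(b_n))\bigr)\le \varepsilon^{-1}\tau(e_n^{(L)}b_n)$, and satisfies $\|\tilde{e}_n^{(L)}b_n\|\le \sqrt{\varepsilon}+o_n(1)$ since $\|h_\varepsilon(b_n)b_n\|\le\varepsilon$; your diagonal argument then closes the induction. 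Without this (or an equivalent) explicit construction, the proof is not complete.
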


\section{Dynamical comparison and almost finiteness}
In this section, we address the relationship between $m$-almost finiteness and dynamical $m$-comparison, establishing Theorem 1.1 and the dynamical part of Theorem 1.2. We first prove the following key lemma. This shows that for every finite disjoint collection of closed subsets of $E_G(X)$ we can find disjoint collections of closed subsets of $X$ that correspond to it in a nice way.
\begin{lemma}
Let $\alpha: G\curvearrowright X$ be a minimal free action such that $E_G(X)$ is compact in the weak*-topology. Then for every $\epsilon>0$ and set $W=\bigsqcup_{j=1}^L W_j$ which is a disjoint union of closed subsets of $E_G(X)$, there are pairwise disjoint compact subsets $\{K_j\}_{j=1}^L$ of $X$ such that $\mu(K_j)>1-\epsilon$ for all $\mu\in W_j$.
\end{lemma}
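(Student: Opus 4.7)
The plan is to use Lin's theorem (Lemma 2.5) as a bridge from $E_G(X)$ to $C(X)$: first translate indicator-type functions of the $W_j$'s on $E_G(X)$ into affine continuous functions on $T(C(X)\rtimes_r G)$, then (via Lin) into positive elements of the crossed product, then (via the canonical conditional expectation) into positive continuous functions on $X$ whose integrals concentrate the right mass on each $W_j$. First I would use Urysohn's lemma on the compact Hausdorff space $E_G(X)$ to pick continuous $\phi_j:E_G(X)\to[0,1]$ with $\phi_j\equiv 1$ on $W_j$ and with pairwise disjoint supports, so that $\sum_j\phi_j\le 1$ on $E_G(X)$. Since $M_G(X)$ is a Choquet simplex with closed extreme boundary---i.e., a Bauer simplex---each $\phi_j$ extends uniquely to a continuous affine function $\widetilde\phi_j:M_G(X)\to[0,1]$, and the relations $\widetilde\phi_j\ge 0$ and $\sum_j\widetilde\phi_j\le 1$ persist on all of $M_G(X)$ by the barycentric representation.

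Next I would identify $M_G(X)$ with $T(B)$, where $B=C(X)\rtimes_r G$. Under the hypotheses of the lemma, $B$ is simple, separable, unital and nuclear with $T(B)\ne\emptyset$, so Lemma 2.5 applies. Fix small parameters $\eta,\epsilon'>0$ and replace $\widetilde\phi_j$ by the strictly positive $\psi_j:=(1-\eta)\widetilde\phi_j+\eta/L$, which still satisfies $\sum_j\psi_j\le 1$ and gives $\psi_j(\tau_\mu)\ge 1-\eta$ for $\mu\in W_j$ while $\psi_j(\tau_\mu)=\eta/L$ for $\mu\in W_{j'}$, $j'\ne j$. Lemma 2.5 then produces $b_j\in B^+$ with $\tau(b_j)=\psi_j(\tau)$ for all $\tau\in T(B)$ and $\|b_j\|\le 1+\epsilon'$. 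Applying the canonical faithful conditional expectation $E:B\to C(X)$ yields $h_j:=E(b_j)\in C(X)^+$ with $\|h_j\|\le 1+\epsilon'$ and $\int_X h_j\,d\mu=\tau_\mu(b_j)=\psi_j(\tau_\mu)$ for every $\mu\in M_G(X)$.

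Now I would turn the $h_j$'s into disjoint closed sets in $X$. For a small $\delta>0$ define
\[
V_j:=\Bigl\{x\in X:h_j(x)>\sum_{j'\ne j}h_{j'}(x)+\delta\Bigr\};
\]
these sets are open and pairwise disjoint, since $x\in V_j\cap V_k$ with $j\ne k$ would force both $h_j(x)>h_k(x)+\delta$ and $h_k(x)>h_j(x)+\delta$. For $\mu\in W_j$ the function $g:=h_j-\sum_{j'\ne j}h_{j'}$ satisfies $\int g\,d\mu=1-2\eta(L-1)/L\ge 1-2\eta$ and $g\le 1+\epsilon'$, so the elementary estimate $\int g\,d\mu\le\delta+(1+\epsilon'-\delta)\mu(g>\delta)$ gives $\mu(V_j)\ge(1-2\eta-\delta)/(1+\epsilon'-\delta)$, which exceeds $1-\epsilon/2$ once $\eta,\delta,\epsilon'$ are small enough.

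To finish I would shrink each $V_j$ to a compact subset uniformly over $W_j$. For each $\mu\in W_j$, regularity of $\mu$ produces a compact $C_\mu\subset V_j$ with $\mu(C_\mu)>1-\epsilon/2$; enclose it in an open $U_\mu$ with $\overline{U_\mu}\subset V_j$. Since $\nu\mapsto\nu(U_\mu)$ is lower semi-continuous, $\{\nu\in M(X):\nu(U_\mu)>1-\epsilon\}$ is an open neighbourhood of $\mu$. Covering the compact set $W_j$ by finitely many such neighbourhoods and setting $K_j$ to be the union of the corresponding closures $\overline{U_{\mu_i}}$ produces a compact subset of $V_j$ with $\mu(K_j)>1-\epsilon$ for every $\mu\in W_j$; pairwise disjointness of $\{V_j\}$ descends to $\{K_j\}$. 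I expect the main hurdle to be the second step---realising the purely tracial data $\psi_j\in\mathrm{Aff}(T(B))$ as an honest positive element of $B$---which is precisely the content of Lin's theorem and explains the unexpected detour through $C^\ast$-algebras.
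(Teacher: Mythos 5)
Your proposal is correct, and the first two-thirds of it is exactly the paper's argument: Urysohn functions on the compact set $E_G(X)$ separating the $W_j$, extension to continuous affine functions on the Bauer simplex $M_G(X)\cong T(B)$, Lin's theorem (Lemma 2.5) to realise them as positive elements of $B$, and the conditional expectation to land in $C(X)$. Where you genuinely diverge is in how disjointness of the target sets is obtained. The paper first orthogonalises the functions $g_n^j=E(e_n^j)/\|e_n^j\|$ by invoking the Toms--White--Winter lemma (Lemma 2.6) to arrange $\|g_n^j g_n^{j'}\|<\epsilon^2$, and then simply takes the superlevel sets $K_j=\{g_n^j\ge\epsilon\}$, which are closed, automatically disjoint, and carry mass $>1-2\epsilon$ for $\mu\in W_j$ by a direct integral estimate. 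You instead get disjointness for free from the definition $V_j=\{h_j>\sum_{j'\ne j}h_{j'}+\delta\}$ and recover the mass bound by a Markov-type inequality; this avoids Lemma 2.6 entirely, which is a genuine simplification. Your final covering/regularity paragraph, however, is unnecessary: the closed sets $\{x:h_j(x)\ge\sum_{j'\ne j}h_{j'}(x)+\delta\}$ are already compact, pairwise disjoint by the same two-sided inequality argument, contain $V_j$, and hence already satisfy $\mu(K_j)>1-\epsilon$ for all $\mu\in W_j$, so you can stop there just as the paper does.
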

\begin{proof}

Given an $\epsilon>0$ and denote by $A$ the C*-algebra $C(X)\rtimes_r G$, which is simple since the action is minimal and free. We write  $H: M_G(X)\rightarrow T(A)$ for the homeomorphism defined by $\tau_\mu=H(\mu)$ such that $\tau_{\mu}(a)=\int_X E(a)\,d\mu$.  Note that $\partial_eT(A)=H(E_G(X))$ is compact under the weak*-topology. We also define $V_j=H(W_j)$ for all $ j=1,2, \dots ,L$, which are closed subsets of
$\partial_eT(A)$. For each $j=1,2,\dots,L$ and $n\in \mathbb{N}^+$, choose a strictly positive continuous function $f_n^j: \partial_eT(A)\rightarrow [0,1]$ with the norm $\|f_n^j\|=1+1/n$
such that $f_n^j=1+1/n$ on $V_j$ and $f_n^j=1/n$ on $\bigsqcup_{j'\neq j}V_{j'}$. This is possible by Urysohn's lemma as  the
$V_1, \dots, V_L$ are pairwise disjoint closed subsets of $\partial_eT(A)$. Since $\partial_eT(A)$ is
compact, for each $j$ and $n$ we can extend $f_n^j$ to a strictly positive continuous affine function on $T(A)$ with the same norm, which we also denote by $f_n^j$. Now apply Lemma 2.5 to obtain a sequence $(e_n^j)_n$ of positive elements of $A$ such that $\|e^j_n\|\leq 1+2/n$ and
$$f^j_n(\tau)=\tau(e_n^j)$$
for all $\tau\in T(A)$.
Define the functions $h_n^j=E(e_n^j)$ on $X$, where $E$ is the faithful conditional expectation from $A$ onto
$C(X)$. Observe that $\|h_n^j\|\leq \|e_n^j\|\leq 1+2/n$. Now, since $\tau(e_n^j)=\tau(h_n^j)$ for all $\tau\in T(A)$, we have
$$f^j_n(\tau)=\tau(h_n^j)$$
for all $\tau\in T(A)$. Define $g^j_n=\frac{h_n^j}{1+2/n}$. Then for each $n\in \mathbb{N}^+$ one has
$$\tau(g_n^j)=(n+1)/(n+2)$$ for every $\tau\in V_j$
while
$$\tau(g_n^j)=1/(n+2)$$ for every $\tau\in \bigsqcup_{j'\neq j}V_{j'}$.

Therefore, for the given $\epsilon$, for each $j=1,\dots, L$ there is an $N_j$ such that $\tau(g_n^j)>1-\epsilon$ whenever $\tau\in V_j$ and $n> N_j$. In addition, for $1\leq j, j'\leq L$ with $j\neq j'$ one has
$$\lim\limits_{n\to \infty}\sup\limits_{\tau\in \bigsqcup_{j=1}^L V_j}\tau(g_n^jg_n^{j'})=0.$$
Now apply Lemma 2.6 to the abelian $C^\ast$-algebra $C(X)$ with $T_0=\bigsqcup_{j=1}^L V_j$ and sequences $(g^1_n)_n$,\dots, $(g^L_n)_n$ (they are trivially central since $C(X)$ is abelian). Then we have sequences $(\tilde{g}^1_n)_n$,\dots, $(\tilde{g}^L_n)_n$ such that for each $1 \leq j\neq j'\leq L$ one has

\begin{enumerate}[label=(\roman*)]
\item $\tilde{g}_n^j\leq g_n^j$;
\item $\lim_{n\to \infty}\|\tilde{g}_n^j\tilde{g}_n^{j'}\|=0$;
\item $\lim_{n\rightarrow \infty}\sup_{\tau\in \bigsqcup_{j=1}^L V_j}|\tau(\tilde{g}_n^j-g_n^j)|=0$.
\end{enumerate}
Thus we may assume $\lim_{n\to \infty}\|g_n^jg_n^{j'}\|=0$ by replacing $g_n^j$ with $\tilde{g}_n^j$. Then for each pair $1 \leq j\neq j'\leq L$, there is an $M_{j,j'}\in \mathbb{N}$ such that $\|g_n^jg_n^{j'}\|<\epsilon^2$ whenever $n>M_{j,j'}$.

For the given $\epsilon>0$, choose an $n>\max\{N_j, M_{j, j'}: 1 \leq j\neq j'\leq L\}$ so that for all $j, j'=1,2,\dots,L$ and
$\tau\in V_j$ one has $\tau(g_n^j)>1-\epsilon$ and $\|g_n^jg_n^{j'}\|<\epsilon^2$ if $j\neq j'$. Define
$K_j=\{x\in X\colon g_n^j(x)\geq \epsilon\}$ for $j=1,2,\dots,L$. The sets $K_1,\dots, K_L$ are pairwise
disjoint since $x\in K_j\cap K_{j'}$ implies $g_n^j(x)g_n^{j'}(x)\geq \epsilon^2$, which is impossible. We write $U_j=\{x\in X\colon g_n^{j}(x)>0\}$ for $j=1,2,\dots,L$. Then for each $\mu\in W_j$ we have the inequality
$$\tau_\mu(g_n^j)=\int_X g_n^j\, d\mu=\int_{K_j}g_n^j\, d\mu+\int_{U_j\setminus K_j}g_n^j \, d\mu>1-\epsilon$$
while $\int_{U_j\setminus K_j}g_n^j\,d\mu\leq \epsilon\cdot\mu(U_j\setminus K_j)\leq \epsilon$. This implies that $\mu(K_j)=1\cdot\mu(K_j)\geq \int_{K_j}g_n^j\,d\mu>1-2\epsilon$.
\end{proof}

Note that for a fixed open subset $O$ of $X$, the function $f$ on $M(X)$ given by $f\colon \mu\rightarrow \mu(O)$ is lower semicontinuous. Similarly, if $F$ is closed, $f$ defined on $M(X)$ by $f\colon \mu\rightarrow \mu(F)$ is upper semicontinuous. The following lemma is a slightly stronger version of Lemma 9.1 in \cite{D}.

\begin{lemma}
Let $X$ be a compact metrizable space with a compatible metric $d$ and let $\Omega$ be a weak* closed subset of $M(X)$. Let $\lambda>0$. Let $A$ be a closed subset of $X$ such that $\mu(A)<\lambda$ for all $\mu\in \Omega$. Then there is a $\delta_0>0$ such that
$$\mu(\{x\in X\colon d(x,A)\leq \delta_0\})<\lambda$$
for all $\mu\in \Omega$.
\end{lemma}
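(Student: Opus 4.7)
The plan is to combine three ingredients: (a) for each fixed $\mu$, the continuity-from-above of $\mu$ as a measure applied to the decreasing family of closed $\delta$-neighborhoods of $A$; (b) the upper semicontinuity of $\nu \mapsto \nu(F)$ for a closed set $F$; and (c) the weak* compactness of $\Omega$ (which is a closed subset of the compact set $M(X)$).

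For each $\delta > 0$, set $A_\delta = \{x \in X : d(x, A) \leq \delta\}$, which is closed. Since $A$ is closed, $\bigcap_{\delta > 0} A_\delta = A$, so for each $\mu \in \Omega$ continuity of measure gives $\mu(A_\delta) \searrow \mu(A) < \lambda$ as $\delta \searrow 0$. Hence for every $\mu \in \Omega$ we may choose $\delta(\mu) > 0$ with $\mu(A_{\delta(\mu)}) < \lambda$.

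Next I would fix such a $\delta(\mu)$ and invoke upper semicontinuity of $\nu \mapsto \nu(A_{\delta(\mu)})$ (noted just before the lemma's statement) to produce a weak* open neighborhood $U_\mu$ of $\mu$ in $M(X)$ such that $\nu(A_{\delta(\mu)}) < \lambda$ for all $\nu \in U_\mu$. The family $\{U_\mu : \mu \in \Omega\}$ covers $\Omega$, which is weak* compact, so finitely many of them suffice: $\Omega \subseteq U_{\mu_1} \cup \cdots \cup U_{\mu_k}$. Set $\delta_0 = \min_{1 \leq i \leq k} \delta(\mu_i) > 0$. Then $A_{\delta_0} \subseteq A_{\delta(\mu_i)}$ for every $i$, so for any $\nu \in \Omega$ there is some $i$ with $\nu \in U_{\mu_i}$, and therefore $\nu(A_{\delta_0}) \leq \nu(A_{\delta(\mu_i)}) < \lambda$, as desired.

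I do not anticipate a substantive obstacle here: the statement is essentially a uniformization of pointwise continuity of measure along a decreasing family of closed sets, and the only technical ingredient beyond standard measure theory is the upper semicontinuity/compactness machinery on $M(X)$, both of which are already in use elsewhere in the paper. The one place to be a little careful is to use $A_\delta$ (the closed $\delta$-neighborhood) rather than the open one, so that upper semicontinuity applies directly and so that the containment $A_{\delta_0} \subseteq A_{\delta(\mu_i)}$ for $\delta_0 \leq \delta(\mu_i)$ is immediate.
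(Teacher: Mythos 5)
Your proof is correct and follows essentially the same route as the paper's: both use the closed neighborhoods $N_\delta=\{x: d(x,A)\leq\delta\}$, the pointwise choice of $\delta$ via continuity of measure from above, upper semicontinuity of $\nu\mapsto\nu(N_\delta)$ to get weak* open sets, and compactness of $\Omega$ plus monotonicity in $\delta$ to extract a uniform $\delta_0$. The only cosmetic difference is that the paper covers $\Omega$ directly by the sets $O_\delta=\{\nu: \nu(N_\delta)<\lambda\}$ rather than by neighborhoods $U_\mu$ of individual measures.
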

\begin{proof}
For each $\delta>0$ set $N_\delta=\{x\in X: d(x,A)\leq \delta\}$. Then for every $\mu\in \Omega$, $\mu(A)<\lambda$ implies that there is a $\delta>0$ such that $\mu(N_\delta)<\lambda$. Now, write $O_\delta=\{\mu\in M(X): \mu(N_\delta)<\lambda\}$. Then $\{O_\delta: \delta>0\}$ is an open cover of $\Omega$ since $\mu(N_\delta)$ is an upper-semicontinuous function of $\mu$ as mentioned above. By the compactness of $\Omega$, one has $\Omega\subset \bigcup_{i=1}^n O_{\delta_i}$ for some subcover $\{O_{\delta_i}: i=1,2,\dots,n\}$. Let $\delta_0=\min\{\delta_i: i=1,2,\dots,n\}$. It follows that $\Omega\subset O_{\delta_0}$ and thus $\mu(N_{\delta_0})<\lambda$ for all $\mu\in \Omega$ .
\end{proof}

The following lemma allows us to adjust the collection of Borel towers arising in the Ornstein-Weiss tiling argument (Theorem 4.46 in \cite{Kerr-L}) to be a castle of a form that appears in the definition of  $m$-almost finiteness.

\begin{lemma}
Let $\alpha\colon G \curvearrowright X$ be a free action. Fix a $\mu\in M_G(X)$ and an integer $n\in \mathbb{N}$. For every finite subset $F\subset G$ and $\epsilon, \eta>0$, there is a castle $\{(T_k, \overline{V_k}): k=1,2,\dots,K\}$ such that for each $k$, $V_k$ is open, $T_k$ is $(F,\eta)$-invariant while $\operatorname{diam}(\overline{sV_k})<\eta$ for all $s\in T_k$, $\mu(\bigsqcup_{k=1}^K T_kV_k)>1-\epsilon$ and the interval $[\frac{1}{2n}|T_k|, \frac{1}{n}|T_k|]$ contains an integer $d_{k}$.
\end{lemma}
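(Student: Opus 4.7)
The strategy is to start from the Ornstein--Weiss tiling theorem (Theorem 4.46 in \cite{Kerr-L}) and refine its output in two stages: first, arrange that the shape cardinalities force the divisibility condition on $[|T_k|/(2n), |T_k|/n]$; second, replace the Borel bases by open sets with small-diameter level translates while preserving disjointness and most of the measure.

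For the first stage, I would apply the tiling theorem with respect to a finite subset $F' \supseteq F \cup \{e\}$ of cardinality at least $4n$ and an invariance parameter $\eta' < \min(\eta, 1/2)$. For any $(F',\eta')$-invariant finite set $T$, since $e \in F'$ one has $T \subseteq F'T$, and for any fixed $t \in T$ one also has $F't \subseteq F'T$ with $|F't| = |F'|$; thus $|F'| \leq |F'T| \leq (1+\eta')|T|$, forcing $|T| > |F'|/2 \geq 2n$. Since any real interval of length at least one contains an integer and $|T|/n - |T|/(2n) = |T|/(2n) > 1$, the divisibility condition is automatic. The output is a Borel castle $\{(T_k, V_k)\}_{k=1}^K$ with each $T_k$ $(F,\eta)$-invariant and $\mu(\bigsqcup_k T_k V_k) > 1 - \epsilon/2$.

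For the second stage, inner regularity of $\mu$ lets me replace each Borel base $V_k$ by a compact subset $C_k \subseteq V_k$ with arbitrarily small measure loss, so the disjoint compact families $\{T_k C_k\}_k$ and $\{sC_k\}_{s \in T_k}$ still cover most of $\bigsqcup_k T_k V_k$. Using uniform continuity of the finitely many homeomorphisms $\alpha_s$ for $s \in \bigcup_k T_k$, I subdivide each $C_k$ into finitely many pairwise disjoint Borel pieces of sufficiently small diameter, and pass to compact subsets again by inner regularity, obtaining a refined family $\{C_{k,j}\}$ of pairwise disjoint compact sets with every $sC_{k,j}$ of diameter less than $\eta/2$ for $s \in T_k$. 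Because the compact families $\{T_k C_{k,j}\}_{(k,j)}$ and $\{sC_{k,j}\}_{s \in T_k}$ are all pairwise disjoint, a positive lower bound $\rho$ exists on all the relevant separation distances; a further appeal to uniform continuity produces $\delta_1 > 0$ so small that the open $\delta_1$-neighborhoods $V_{k,j}$ of $C_{k,j}$ have closures whose $T_k$-translates stay within $\rho/3$-neighborhoods of the corresponding $sC_{k,j}$. This yields a castle $\{(T_k, \overline{V_{k,j}})\}_{k,j}$ meeting all the diameter and disjointness conditions, and $\mu(\bigsqcup_{k,j} T_k V_{k,j}) > 1 - \epsilon$ follows by taking each successive measure loss summably small from the outset.

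The principal obstacle is the simultaneous bookkeeping of the subdivision diameter, the thickening radius, and the separation constants. In particular, one cannot thicken the original $C_k$ directly, because $C_k$ can itself have large diameter, so the intermediate subdivision into small compact pieces before thickening is essential for controlling the diameters of the level translates in the final open castle.
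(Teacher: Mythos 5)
Your proposal is correct and follows essentially the same route as the paper: apply the Ornstein--Weiss theorem with an enlarged $F$ and small $\eta$ to force the integer condition on $[\frac{1}{2n}|T_k|,\frac{1}{n}|T_k|]$, subdivide the Borel bases into pieces whose translates have small diameter, pass to compact subsets by inner regularity, and then thicken to open sets small enough to preserve both the disjointness of the castle and the diameter bounds. The only cosmetic differences are the order of the compactification and subdivision steps and your use of pairwise separation distances (rather than the paper's appeal to normality and intersecting preimages of disjoint open neighborhoods) to keep the thickened levels disjoint.
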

\begin{proof}
Since the action $\alpha\colon G \curvearrowright X$ is free, for all $x\in X$, one has $\mu(\{x\})=0$ and thus $\mu$ is atomless. Now, the Ornstein-Weiss theorem (Theorem 4.46 in \cite{Kerr-L}) implies that there is a castle $\{(T_k, B_k): k=1,2,\dots,K\}$ such that the shapes $T_k$ are $(F,\eta)$-invariant and the bases $B_k$ are Borel for all $k=1,2,\dots,K$ with $\mu(\bigsqcup_{k=1}^KT_kB_k)>1-\epsilon/2$. Since $G$ is infinite, we may enlarge $F$ and shrink $\eta$ sufficiently so that for each $k\leq K$ there is an integer $d_{k}$ in $[\frac{1}{2n}|T_k|, \frac{1}{n}|T_k|]$.

By uniform continuity, there is an $0<\eta'<\eta$ such that for all $s\in \bigcup_{k=1}^K T_k$ and $x,y\in X$, if $d(x,y)<\eta'$, then $d(sx,sy)<\eta$. For each $B_k$, there is an open cover of $\overline{B_k}$, say  $ \{O_{i,k} : i\in I_k\}$, such that diam$(O_{i,k})<\eta'/2$ for every $i\in I_k$. Then by compactness there is a finite subcover of $\overline{B_k}$, say $\overline{B_k}\subset \bigcup_{i=1}^{n_k} O_{i,k}$. Write $D_{i,k}=O_{i,k}\setminus\bigcup_{j=1}^{i-1}O_{j,k}$ and $C_{i,k}=B_k\cap D_{i,k}$, the latter of which satisfies diam$(C_{i,k})<\eta'/2$. Taking the sets $C_{i,k}$ now to be bases, we have a castle $\{(T_k, C_{i,k}):i=1,2,\dots,n_k, k=1,2,\dots,K\}$, which satisfies   $\mu(\bigsqcup_{k=1}^K\bigsqcup_{i=1}^{n_k}T_kC_{i,k})>1-\epsilon/2$. For each $i$ and $k$, there is a compact set $M_{i,k}\subset C_{i,k}$ such that $\mu(C_{i,k}\setminus M_{i,k})<\frac{\epsilon}{2\sum_{k=1}^Kn_k|T_k|}$ and hence $\mu(\bigsqcup_{k=1}^K\bigsqcup_{i=1}^{n_k}T_kM_{i,k})>1-\epsilon$.

We enlarge each $M_{i,k}$ to an open set $N_{i,k}$ such that diam$(N_{i,k})<\eta'$ and $\{(T_k,
N_{i,k}):i=1,2,\dots,n_k, k=1,2,\dots,K\}$ is a castle. To do this, by normality, for the disjoint family
$\{sM_{i,k}: s\in T_k, i\leq n_k, k\leq K\}$, we can first find another disjoint family  $\{U_{s,i,k}\supset
sM_{i,k}: s\in T_k, i\leq n_k, k\leq K\}$. Then for each $i\leq n_k$ and $k\leq K$, one can define
$N_{i,k}=\{x\in X: d(x, M_{i,k})<\eta'/2\}\cap (\bigcap_{s\in T_k} s^{-1}U_{s,i,k})$. Furthermore, for each pair
$(i,k)$, there is a $V_{i,k}$ such that $M_{i,k}\subset V_{i,k}\subset \overline{V_{i,k}}\subset N_{i,k}$.
The castle $\{(T_k, V_{i,k}):i=1,2,\dots,n_k, k=1,2,\dots,K\}$ is now the one that we want. Indeed,
diam$(\overline{V_{i,k}})<\eta'$ implies that diam$(\overline{sV_{i,k}})<\eta$  for all $s\in T_k$. Since
$M_{i,k}\subset V_{i,k}$, we have $\mu(\bigsqcup_{k=1}^K\bigsqcup_{i=1}^{n_k}T_kV_{i,k})>1-\epsilon$.

\end{proof}

Now we are ready to prove the following theorem, which may be regarded as a dynamical analogue of the known result on the Toms-Winter conjecture which states that strict comparison implies $\mathcal{Z}$-stability when the set of extreme tracial states is compact and finite-dimensional (\cite{K-R}, \cite{S} and \cite{T-W-W}).

\begin{theorem}
Let $\alpha\colon G \curvearrowright X$ be a minimal free action, where $E_G(X)$ is compact and of covering dimension $m$ in the weak*-topology. If $\alpha$ has dynamical comparison, then it is $m$-almost finite.
\end{theorem}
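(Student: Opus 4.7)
The plan is to build the required system of towers in three stages, combining the covering-dimension $m$ structure of $E_G(X)$ with the Ornstein-Weiss construction of Lemma 3.3 and the transfer mechanism of Lemma 3.1, and then to absorb the small leftover via dynamical comparison. Fix $n$, finite $K \subseteq G$, and $\delta > 0$. For each $\mu \in E_G(X)$, Lemma 3.3 produces an open castle $\mathcal{C}_\mu = \{(T_k^\mu, \overline{V_k^\mu})\}$ with $(K,\delta)$-invariant shapes, levels of diameter $<\delta$, integers $d_k^\mu \in [|T_k^\mu|/(2n), |T_k^\mu|/n]$, and tower union $O_\mu = \bigsqcup_k T_k^\mu V_k^\mu$ satisfying $\mu(O_\mu) > 1 - \epsilon$ for a small parameter $\epsilon > 0$ to be chosen at the end. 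Weak*-lower semicontinuity of $\nu \mapsto \nu(O_\mu)$ and compactness of $E_G(X)$ yield finitely many $\mu_1,\ldots,\mu_M$ whose open neighborhoods $U_i := \{\nu : \nu(O_{\mu_i}) > 1-2\epsilon\}$ cover $E_G(X)$. Using $\dim E_G(X) = m$, refine $\{U_i\}$ to a closed cover $\{W_j\}_{j \in J}$ with $J = J_0 \sqcup \cdots \sqcup J_m$ and $W_j \cap W_{j'} = \emptyset$ for distinct $j, j' \in J_c$, choose $i(j)$ with $W_j \subseteq U_{i(j)}$, and for each color $c$ apply Lemma 3.1 to the disjoint family $\{W_j\}_{j \in J_c}$ to obtain pairwise disjoint compact sets $\{K_j\}_{j \in J_c}$ with $\nu(K_j) > 1 - \epsilon'$ for every $\nu \in W_j$, where $\epsilon'$ is chosen after the castle sizes are fixed.

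The second stage localizes each castle $\mathcal{C}_{\mu_{i(j)}}$ into $K_j$. Enlarge $K_j$ to an open $N_j \supseteq K_j$ so that within each color class the $N_j$'s remain pairwise disjoint. Shrink each base by setting $V_{k,j} := V_k^{\mu_{i(j)}} \cap \bigcap_{s \in T_k^{\mu_{i(j)}}} s^{-1} N_j$, so every tower $T_k^{\mu_{i(j)}} \overline{V_{k,j}}$ sits inside $N_j$. A union bound combined with $G$-invariance gives $\nu(V_k^{\mu_{i(j)}} \setminus V_{k,j}) \le |T_k^{\mu_{i(j)}}| \cdot \nu(X \setminus N_j) < |T_k^{\mu_{i(j)}}| \epsilon'$, and summing the resulting mass losses over $k$ produces a total loss controlled by (castle data) $\cdot\, \epsilon'$. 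Choosing $\epsilon'$ small enough ensures each $\mathcal{C}_j := \{(T_k^{\mu_{i(j)}}, \overline{V_{k,j}})\}_k$ satisfies $\nu(\bigsqcup_k T_k^{\mu_{i(j)}} V_{k,j}) > 1 - 3\epsilon$ for every $\nu \in W_j$. Collect everything into $\mathcal{C} := \bigsqcup_{j \in J} \mathcal{C}_j$, indexed by $(j,k)$.

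In the third stage, verify the four conditions of $m$-almost finiteness for $\mathcal{C}$. Openness of bases, $(K,\delta)$-invariance of shapes, and the diameter bound are inherited from Lemma 3.3. The chromatic bound follows by coloring each tower in $\mathcal{C}_j$ by $c(j)$: within a color class the towers lie in the disjoint neighborhoods $\{N_j : j \in J_c\}$, and within $\mathcal{C}_j$ they are disjoint by castle-ness, so the chromatic number is at most $m+1$. For (iv), take $S'_{k,j} \subseteq T_k^{\mu_{i(j)}}$ of cardinality $d_k^{\mu_{i(j)}} \le |T_k^{\mu_{i(j)}}|/n$; by $G$-invariance, $\nu(\bigsqcup_{j,k} S'_{k,j} V_{k,j}) \ge (1/(2n))\, \nu(\bigsqcup_{j,k} T_k^{\mu_{i(j)}} V_{k,j}) \ge (1-3\epsilon)/(2n)$ for every ergodic $\nu$, which extends to all $\nu \in M_G(X)$ by ergodic decomposition; similarly $\nu(X \setminus \bigsqcup_{j,k} T_k^{\mu_{i(j)}} V_{k,j}) < 3\epsilon$ uniformly in $\nu \in M_G(X)$. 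Taking $\epsilon < 1/(6n+3)$ yields the strict inequality required for dynamical comparison, which delivers (iv).

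The main obstacle is the localization in the second stage: the naive union bound contributes a $|T_k|^2$ factor to the lost mass, which would be fatal if $\epsilon'$ were fixed in advance of the castles. The remedy is the order of operations: Lemma 3.3 first produces the castles and freezes the shape sizes, after which Lemma 3.1 is applied with $\epsilon'$ tiny enough to absorb the quadratic losses. A complementary subtlety is that the closed refinement delivered by $\dim E_G(X) = m$ must simultaneously be subordinate to $\{U_i\}$ (so the pre-localization measure bound transfers to every $\nu \in W_j$) and admit the $(m+1)$-coloring of $J$; this is exactly the standard consequence of the covering-dimension hypothesis.
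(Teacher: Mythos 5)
Your proposal is correct and follows essentially the same route as the paper's proof: per-ergodic-measure castles from Lemma 3.3, a compactness argument plus a colored closed refinement from the covering-dimension hypothesis, localization of each castle into the pairwise disjoint compact sets supplied by Lemma 3.1 (with the quadratic loss absorbed by choosing the Lemma 3.1 tolerance after the shapes are fixed), and dynamical comparison to absorb the remainder. The only divergence is the passage of the measure inequalities from $E_G(X)$ to all of $M_G(X)$, where you invoke the ergodic decomposition while the paper instead replaces $O$ by a closed subset and $F$ by an open superset via Lemma 3.2 and then uses convexity together with the portmanteau theorem; both are valid.
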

\begin{proof}
First we fix an integer $n\in \mathbb{N}$, a finite subset $F\subset G$, and real numbers $\eta>0$ and
$\frac{1}{4n+2}>\epsilon >0$. Then for every $\tau\in E_G(X)$, Lemma 3.3 implies that there is a castle $\mathcal{T}_{\tau}=\{(S_k,
\overline{V_k})\colon k=1,2,\dots,K\}$ where the sets $V_k$ are open, the shapes $S_k$  are
$(F,\eta)$-invariant,  diam($\overline{sV_k}$)$<\eta$ for all $s\in S_k$, $\tau(\bigsqcup_{k=1}^{K}
S_kV_k)>1-\epsilon$, and the interval $[\frac{1}{2n}|S_k|, \frac{1}{n}|S_k|]$ contains an integer $d_{k,\tau}$.
Define $T_{\tau}=\bigsqcup_{k=1}^{K} S_kV_k$, which is open. Then, by the remark above, the function on $E_G(X)$ defined by $\rho\rightarrow \rho(T_{\tau})$ is lower semicontinuous.

For every $\tau\in E_G(X)$, we define the open neighborhood $U_{\tau}=\{\rho\in E_G(X)\colon \rho(T_\tau)>1-\epsilon\}$ of $\tau$, which is open by the semicontinuity of $\rho(T_\tau)$. The compactness of $ E_G(X)$ then implies that there is an $I\in \mathbb{N}$ such that $ E_G(X)=\bigcup_{i=1}^I U_{\tau_i}$. Since $\dim(E_G(X))\leq m$, there is a finite cover $\mathcal{W}$ of $E_G(X)$ consisting of closed sets such that $\mathcal{W}$ refines $\mathcal{U}=\{U_{\tau_1},\dots,U_{\tau_I}\}$ and a map $c: \mathcal{W}\rightarrow  \{0,1,\dots,m\}$ such that $c(W)=c(W')$ implies $W\cap W'=\emptyset$. For each $i\in \{0,1,\dots,m\}$, write $\mathcal{W}^{(i)}=\{W_1^{(i)},\dots,W_{L_i}^{(i)}\}$. Then for each $i\leq m$ and $j\leq L_i$, there is a $\tau^{(i)}_{j}$ such that $W_j^{(i)}\subset U_{\tau_j^{(i)}}$. This implies that there is a finite collection of towers $\{(S^{(i)}_{k,j}, \overline{V^{(i)}_{k,j}})\colon k=1,2,\dots, K_j^{(i)}, j=1,2,\dots,L_i,i=0,\dots,m\}$ such that for each $\rho\in W^{(i)}_j$ one has $\rho(T_{\tau^{(i)}_j})=\rho(\bigsqcup_{k=1}^{K_j^{(i)}}S^{(i)}_{k,j}V^{(i)}_{k,j})>1-\epsilon$.

Now fix a $i\in \{0,1,\dots,m\}$.   Apply Lemma 3.1 to $R_i=\bigsqcup_{j=1}^{L_i}W_j^{(i)}$ to obtain a collection of pairwise disjoint compact sets $\{C^{(i)}_j\}_{j=1}^{L_i}$ such that for all $\rho\in W_j^{(i)}$ one has $\rho(C_j^{(i)})>1-\frac{\epsilon}{(\sum_{k=1}^{K^{(i)}_j}|S^{(i)}_{k,j}|)^2}$. For $\{C^{(i)}_j\}_{j=1}^{L_i}$, there are collections of pairwise disjoint open sets $\{N^{(i)}_j\}_{j=1}^{L_i}$ and $\{M^{(i)}_j\}_{j=1}^{L_i}$ such that $C^{(i)}_j\subset N^{(i)}_j\subset \overline{N^{(i)}_j}\subset M^{(i)}_j$. Define $Y^{(i)}_j=\bigcap_{s\in \bigcup_{k=1}^{K^{(i)}_j} S^{(i)}_{k,j}}s^{-1}N^{(i)}_j$.

Note that towers in the collection $\{(S^{(i)}_{k,j}, \overline{V^{(i)}_{k,j}\cap Y^{(i)}_j})\colon k=1,2,\dots, K_j^{(i)}, j=1,2,\dots,L_i\}$  are pairwise disjoint. Indeed, for all $j, j'\leq L_i$, $s\in S^{(i)}_{k_1,j}$ and $t\in S^{(i)}_{k_2,j'}$ one has $s(\overline{V^{(i)}_{k_1,j}\cap Y^{(i)}_j})\subset \overline{N^{(i)}_j}$ and $t(\overline{V^{(i)}_{k_2,j'}\cap Y^{(i)}_{j'}})\subset \overline{N^{(i)}_{j'}}$. Then for all $\rho\in W^{(i)}_j$:

\begin{align*}
 \rho((Y^{(i)}_j)^c)&=\rho(\bigcup_{s\in \bigcup_{k=1}^{K^{(i)}_j} S^{(i)}_{k,j}}s^{-1}(N^{(i)}_j)^c)
 \leq\sum_{k=1}^{K^{(i)}_j}|S^{(i)}_{k,j}|\cdot\frac{\epsilon}{(\sum_{k=1}^{K^{(i)}_j}|S^{(i)}_{k,j}|)^2}\\
                     &=\frac{\epsilon}{(\sum_{k=1}^{K^{(i)}_j}|S^{(i)}_{k,j}|)}.\\
\end{align*}

 It follows that $$\rho(V^{(i)}_{k,j}\cap Y^{(i)}_j)\geq \rho(V^{(i)}_{k,j})-\frac{\epsilon}{(\sum_{k=1}^{K^{(i)}_j}|S^{(i)}_{k,j}|)},$$
 and thus
\begin{align*}
\rho(\bigsqcup_{k=1}^{K^{(i)}_j} S^{(i)}_{k,j}(V^{(i)}_{k,j}\cap Y^{(i)}_j))&\geq \sum_{k=1}^{K^{(i)}_j}|S^{(i)}_{k,j}|(\rho(V^{(i)}_{k,j})-\frac{\epsilon}{(\sum_{k=1}^{K^{(i)}_j}|S^{(i)}_{k,j}|)})\\
 &=\sum_{k=1}^{K^{(i)}_j}|S^{(i)}_{k,j}|\rho(V^{(i)}_{k,j})-\sum_{k=1}^{K^{(i)}_j}|S^{(i)}_{k,j}|\frac{\epsilon}{(\sum_{k=1}^{K^{(i)}_j}|S^{(i)}_{k,j}|)}\\
 &=\rho(\bigsqcup_{k=1}^{K_j^{(i)}} S^{(i)}_{k,j}V^{(i)}_{k,j})-\epsilon\\
 &\geq 1-2\epsilon
\end{align*}
for all $\rho\in W^{(i)}_j$.

Then, since $ E_G(X)=\bigcup_{i=0}^m R_i=\bigcup_{i=0}^m\bigsqcup_{j=1}^{L_i} W^{(i)}_j$, for all $\rho\in E_G(X)$ one has:
$$(\star)\ \ \ \ \rho(\bigcup_{i=0}^m\bigsqcup_{j=1}^{L_i}\bigsqcup_k^{K^{(i)}_j} S^{(i)}_{k,j}(V^{(i)}_{k,j}\cap Y^{(i)}_j))\geq 1-2\epsilon.$$
Define $O=\bigcup_{i=0}^m\bigsqcup_j^{L_i}\bigsqcup_k^{K^{(i)}_j} S'^{(i)}_{k,j}(V^{(i)}_{k,j}\cap Y^{(i)}_j)$ where $S'^{(i)}_{k,j}\subset S^{(i)}_{k,j}$ with $|S'^{(i)}_{k,j}|=d_{k,\tau^{(i)}_j}\in[\frac{1}{2n}|S^{(i)}_{k,j}|, \frac{1}{n}|S^{(i)}_{k,j}|] $ and $F=X\setminus\bigcup_{i=0}^m\bigsqcup_j^{L_i}\bigsqcup_k^{K^{(i)}_j} S^{(i)}_{k,j}(V^{(i)}_{k,j}\cap Y^{(i)}_j)$. This implies that $\rho(O)\geq \frac{1}{2n}(1-2\epsilon)$ and $\rho(F)<2\epsilon$ for all $\rho\in  E_G(X)$. Applying Lemma 3.2 to $F$, there is an open set $U\supset F$ such that $\rho(U)<2\epsilon$ for all $\rho\in  E_G(X)$. In the same manner, applying Lemma 3.2 to $O^c$, there is a closed set $D\subset O$ such that $\rho(D)\geq\frac{1}{2n}(1-2\epsilon)$ for all $\rho\in  E_G(X)$. Then since our $\epsilon$ is chosen to be less than $\frac{1}{4n+2}$, one has $\frac{1}{2n}(1-2\epsilon)>2\epsilon$. It turns out that for every $\rho\in  E_G(X)$ one has:

$$(\blacklozenge)\ \ \  \rho(D)\geq \frac{1}{2n}(1-2\epsilon)>2\epsilon>\rho(U);$$

By convexity, $(\blacklozenge)$ also holds for all $\rho\in \textrm{conv}( E_G(X))$. Now, let $\tau_n\rightarrow \tau$ where $\tau_n\in \textrm{conv}( E_G(X))$ and $\tau\in M_G(X)$.  By the portmanteau theorem (Theorem 17.20 in \cite{Kech}), $\tau(D)\geq \limsup_{n\rightarrow \infty}\tau_n(D)\geq \frac{1}{2n}(1-2\epsilon)$ and  $\tau(U)\leq \liminf_{n\rightarrow \infty}\tau_n(U)\leq 2\epsilon$, which implies that  $\rho(D)\geq \frac{1}{2n}(1-2\epsilon)>2\epsilon\geq\rho(U)$ holds for all $\tau\in M_G(X)$. Therefore, $\tau(O)\geq \tau(D)>\tau(U)\geq \tau(F)$ for all $\tau\in M_G(X)$.

Therefore, since the action $\alpha$ has dynamical comparison, one has:
\begin{align*}
X\setminus\bigcup_{i=0}^m\bigsqcup_j^{L_i}\bigsqcup_k^{K^{(i)}_j} S^{(i)}_{k,j}(V^{(i)}_{k,j}\cap Y^{(i)}_j)\prec \bigcup_{i=0}^m\bigsqcup_j^{L_i}\bigsqcup_k^{K^{(i)}_j} S'^{(i)}_{k,j}(V^{(i)}_{k,j}\cap Y^{(i)}_j).
\end{align*}
Finally, we write $\mathcal{T}_i$ for the collection of towers $\{(S^{(i)}_{k,j}, \overline{V^{(i)}_{k,j}\cap Y^{(i)}_j})\colon k=1,2,\dots, K^{(i)}_j, j=1,2,\dots,L_i\}$ for $i=0,1,\dots,m$. Observe that towers in each $\mathcal{T}_i$ are pairwise disjoint.  This implies that the collection of towers $\{\mathcal{T}_i: i=0,1,\dots,m\}$ witnesses that $\alpha$ is $m$-almost finite.

\end{proof}

The theorem below arises from the one above if we assume $E_G(X)$ is compact and zero-dimensional, but weaken ``comparison'' to ``m-comparison'' in order to arrive at almost finiteness. The idea of the proof of the following theorem comes from Theorem 9.2 in \cite{D}. This completes the proof of the dynamical part of Theorem 1.2.

\begin{theorem}
Let $\alpha\colon G \curvearrowright X$ be a minimal free action such that $E_G(X)$ is compact and zero-dimensional in the weak*-topology. If $\alpha$ has dynamical $m$-comparison for some $m\in \mathbb{N}$, then it is almost finite.
\end{theorem}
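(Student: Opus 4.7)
My plan is to follow the scheme of Theorem~3.4, with two modifications that exploit the two hypotheses of Theorem~3.5. Zero-dimensionality of $E_G(X)$ allows me to refine the finite cover $\{U_\tau\}$ of $E_G(X)$ by a \emph{pairwise disjoint} clopen cover rather than one merely of chromatic number $m+1$, so the family $\{\mathcal{T}_i\}_{i=0}^m$ constructed in Theorem~3.4 collapses into a single castle $\{(S_i,\overline{V_i'}):i\in I\}$ with $(K,\delta)$-invariant shapes, small-diameter levels, and chromatic number one. Conditions (i)--(iii) of Definition~2.4 then come for free. The real obstacle is condition~(iv): I need the 0-comparison relation $X\setminus\bigsqcup_i S_iV_i'\prec \bigsqcup_i S_i'V_i'$ with $|S_i'|\leq|S_i|/n$, whereas the hypothesis only delivers $\prec_m$.

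To bridge this gap I would run Lemma~3.3 with $n$ replaced by $n(m+1)$, so that each shape $S_i$ contains an integer $d_i\in[\,|S_i|/(2n(m+1)),\,|S_i|/(n(m+1))\,]$. Inside each $S_i$ I then select $m+1$ pairwise disjoint subsets $S_i^{(0)},\dots,S_i^{(m)}$ of cardinality $d_i$ (possible since $(m+1)d_i\leq|S_i|/n\leq|S_i|$) and fix arbitrary set-theoretic bijections $\phi_i^{(j)}\colon S_i^{(0)}\to S_i^{(j)}$ with $\phi_i^{(0)}=\mathrm{id}$. Set $O^{(0)}:=\bigsqcup_i S_i^{(0)}V_i'$ and $Y:=X\setminus\bigsqcup_i S_iV_i'$. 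Repeating the portmanteau estimate of Theorem~3.4 with $\epsilon<\frac{1}{4n(m+1)+2}$ yields $\mu(Y)<\mu(O^{(0)})$ for all $\mu\in M_G(X)$, and dynamical $m$-comparison then provides a witness: a finite cover $\mathcal{U}=\bigsqcup_{j=0}^m\mathcal{U}_j$ of $Y$ with shifts $\{s_U\}$ such that, for each $j$, the images $\{s_UU:U\in\mathcal{U}_j\}$ are pairwise disjoint subsets of $O^{(0)}$.

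The heart of the proof is to promote this $\prec_m$-witness to a genuine $\prec$-witness into $\bigsqcup_i S_i'V_i'$ (with $S_i':=\bigsqcup_{j=0}^m S_i^{(j)}$) by re-shifting class~$j$ from $S_i^{(0)}V_i'$ into its designated copy $S_i^{(j)}V_i'$ via $\phi_i^{(j)}$. Since the levels $sV_i'$ ($s\in S_i^{(0)}$, $i\in I$) form a disjoint family of open sets whose union is $O^{(0)}$, I may first refine each $U\in\mathcal{U}$ as $U=\bigsqcup_{(i,s)}(U\cap s_U^{-1}(sV_i'))$, keeping the shift $s_U$ and class assignment; this refinement preserves the disjointness inside each class and ensures that every refined $U$ has its image inside a single level $s^{(0)}(U)V_{i(U)}'$ with $s^{(0)}(U)\in S_{i(U)}^{(0)}$. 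For $U\in\mathcal{U}_j$ I then replace the shift by
\[
\tilde s_U:=\phi_{i(U)}^{(j)}\bigl(s^{(0)}(U)\bigr)\cdot s^{(0)}(U)^{-1}\cdot s_U,
\]
so that $\tilde s_UU\subset\phi_{i(U)}^{(j)}(s^{(0)}(U))V_{i(U)}'\subset S_{i(U)}^{(j)}V_{i(U)}'$. A direct check confirms that $\{\tilde s_UU:U\in\mathcal{U}\}$ is pairwise disjoint: within one class and tower the bijection $\phi_i^{(j)}$ preserves the original disjointness; across different classes disjointness follows from $S_i^{(j)}\cap S_i^{(j')}=\emptyset$; and across different towers from the castle property. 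Since $|S_i'|=(m+1)d_i\leq|S_i|/n$, this yields $Y\prec\bigsqcup_i S_i'V_i'$ and closes (iv). I expect the main obstacle to be precisely the bookkeeping of this re-shifting---making sure that $i(U)$ and $s^{(0)}(U)$ are well defined after refinement, and verifying that the modified shifts produce pairwise disjoint images simultaneously across all classes \emph{and} all towers.
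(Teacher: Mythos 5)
Your proposal is correct and follows essentially the same route as the paper: zero-dimensionality collapses the chromatic-number-$(m+1)$ cover of $E_G(X)$ from Theorem~3.4 to a disjoint one, yielding a single castle, and the $\prec_m$-witness is upgraded to a $\prec$-witness by refining each $U$ along the levels $s_U^{-1}tV_i$ and re-shifting the class-$j$ pieces into disjoint copies $S_i^{(j)}$ via bijections from $S_i^{(0)}$ --- exactly the paper's sets $W_{U,i,t}=U\cap s_U^{-1}tV_i$ and maps $\varphi_{i,j}(t)t^{-1}s_U$. The bookkeeping you flag as the main worry is handled in the paper in precisely the way you outline, so no gap remains.
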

\begin{proof}
First,  we fix $n\in \mathbb{N}$, a finite set $F\subset G$, $\eta>0$ and $\frac{1}{4(m+1)n+2}>\epsilon >0$. Then by the same proof of Theorem 3.4, there exists a castle $\{(S_i, \overline{V_i}): i\in I \}$ where the sets $V_i$ are open, the shapes $S_i$  are $(F,\eta)$-invariant,  diam($\overline{sV_i}$)$<\eta$ for all $s\in S_i$ and $\mu(\bigsqcup_{i\in I}S_iV_i)\geq1-2\epsilon$ for all $\mu\in M_G(X)$. In addition, since $G$ is infinite we can enlarge $F$ to make all $S_i$ have large enough cardinality so that there is an $S'_{i,0}\subset S_i$ satisfying $\frac{1}{2(m+1)n}|S_i|<|S'_{i,0}|<\frac{1}{(m+1)n}|S_i|$. Write $O=\bigsqcup_{i\in I}S'_{i,0}V_i$ and $F=X\setminus \bigsqcup_{i\in I}S_iV_i$. Then we have the following inequality for all $\mu\in M_G(X)$:
$$\mu(O)\geq\frac{1}{2(m+1)n}(1-2\epsilon)>2\epsilon\geq \mu(F).$$
Since $\alpha$ has $m$-comparison, there is a finite collection $\mathcal{U}$ of open subsets of $X$ which cover $F$, an $s_U\in G$ for each $U\in \mathcal{U}$, and a partition $\mathcal{U}=\bigsqcup_{j=0}^m\mathcal{U}_j$ such that for each $j=0,1,\dots,m$ the images $s_UU$ for $U\in \mathcal{U}_j$ are pairwise disjoint subsets of $O$. For each $i\in I$, since $|S'_{i,0}|<\frac{1}{(m+1)n}|S_i|$, we can choose pairwise disjoint sets $S'_{i,k}$ of the same cardinality, for $k=1,2, \dots,m$, which allows us to choose a bijection $\varphi_{i,j}: S'_{i,0}\rightarrow S'_{i,j}$.

For $U\in \mathcal{U}$, $i\in I$ and $t\in S'_{i,0}$ we denote by $W_{U,i,t}$ the open set $U\cap s_U^{-1}tV_i$.
For each $j\in\{1,2,\dots,m\}$ and $U\in \mathcal{U}_{j}$, the family $\{W_{U,i,t}:i\in I, t\in S'_{i,0}\}$
forms a partition of $U$. This implies that the sets $\varphi_{i,j}(s_U)t^{-1}s_UW_{U,i,t}$  for  $U\in
\mathcal{U}_{j},i\in I,t\in S'_{i,0}$ are pairwise disjoint and contained in $\bigsqcup_{i\in I}S'_{i,j}V_i$.
This entails  $F\prec \bigsqcup_{i\in I}S'_iV_i$ where $S'_i=\bigsqcup_{j=0}^m S'_{i,j}$ with
$|S'_i|<\frac{1}{n}|S_i|$ and thus verifies that $\alpha$ is almost finite.

\end{proof}

Combined with (i)$\Rightarrow$(ii)$\Rightarrow$(iii)$\Rightarrow$(iv) in Theorem 9.2 in \cite{D}, the theorem above yields the same conclusion as  this theorem from \cite{D} under a weaker hypothesis.

\begin{corollary}
Let $\alpha: G \curvearrowright X$ be a minimal free action. If $E_G(X)$ is compact and zero-dimensional, the following conditions are equivalent.
\begin{enumerate}[label=(\roman*)]
\item $\alpha$ is almost finite;

\item $\alpha$ is $m$-almost finite for some $m\geq 0$;

\item $\alpha$ has comparison;

\item  $\alpha$ has $m$-comparison for some $m\geq 0$.
\end{enumerate}
\end{corollary}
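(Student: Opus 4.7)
The plan is to close the cycle of implications (i)$\Rightarrow$(ii)$\Rightarrow$(iii)$\Rightarrow$(iv)$\Rightarrow$(i). Three of these four arrows are already available with essentially no work. The implication (i)$\Rightarrow$(ii) is immediate from the definitions, since almost finiteness is by convention the case $m=0$ of $m$-almost finiteness; analogously (iii)$\Rightarrow$(iv) is just the observation that dynamical comparison is dynamical $0$-comparison. For the remaining non-trivial known arrow (ii)$\Rightarrow$(iii), I would invoke the chain (i)$\Rightarrow$(ii)$\Rightarrow$(iii)$\Rightarrow$(iv) recorded in Theorem 9.2 of \cite{D}, which provides the passage from $m$-almost finiteness to dynamical comparison (with $m=0$), and which Kerr establishes without any hypothesis on $E_G(X)$.

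The only arrow left is (iv)$\Rightarrow$(i), and this is precisely the content of Theorem 3.5 proved just above: under the standing hypothesis that $E_G(X)$ is compact and zero-dimensional in the weak*-topology, dynamical $m$-comparison for some $m\in \mathbb{N}$ upgrades all the way to almost finiteness. Once Theorem 3.5 is in hand, the corollary reduces to a one-line assembly of these four implications.

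Accordingly, there is really no obstacle inside the corollary itself; all the analytic content is packaged into Theorem 3.5. In that theorem the zero-dimensionality of $E_G(X)$ is what absorbs the chromatic-$(m+1)$ overlap produced by the construction of Theorem 3.4: the clopen decomposability of $E_G(X)$ is transferred, via Lemma 3.1 (itself based on Lin's realization result, Lemma 2.5), to a genuine disjoint separation of the castle pieces over $X$, so that the $m$-comparison hypothesis suffices to tile the remainder into a single pairwise-disjoint castle. Everything else in the corollary is bookkeeping.
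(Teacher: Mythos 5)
Your proposal is correct and follows the paper's own route exactly: the paper likewise closes the cycle by citing the chain (i)$\Rightarrow$(ii)$\Rightarrow$(iii)$\Rightarrow$(iv) from Theorem 9.2 of \cite{D} (valid with no hypothesis on $E_G(X)$) and supplying the remaining arrow (iv)$\Rightarrow$(i) via Theorem 3.5. Your closing remarks on how zero-dimensionality and $m$-comparison interact inside Theorem 3.5 are inessential commentary for the corollary itself, but they do not affect the correctness of the assembly.
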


Now, we would like to bring the small boundary property into the picture. The small boundary property was introduced in \cite{L-W}. The small boundary property is equivalent to mean dimension zero for $\mathbb{Z}$-actions \cite{Linden} and $\mathbb{Z}^d$-actions \cite{G-L-T} with marker property and thus in particular this equivalence holds if the action is minimal and free.  However it is still open for actions of a general amenable group that whether mean dimension zero and the small boundary property are equivalent.

\begin{definition}
An action $\alpha: G\curvearrowright X$ is said to have the \emph{small boundary property} if for every point $x\in X$ and every open $U\ni x$ there is an open neighborhood $V\subset U$ of $x$ such that $\mu(\partial V)=0$ for every $\mu\in M_G(X)$.
\end{definition}

In \cite{L-W} and \cite{Shub-Weiss}, a cardinality argument is used to show that if $E_{\mathbb{Z}}(X)$ is at most countable, then   $\alpha: \mathbb{Z}\curvearrowright X$ has the small boundary property. Theorem 1.1 thus provides a generalization of this result under the assumption that the action is minimal and free. The following proposition was communicated to me by G\'{a}bor Szab\'{o}.

\begin{proposition}
Let $\alpha: G\curvearrowright X$. Suppose that for every $\delta>0$, $\epsilon>0$ there is a collection $\mathcal{U}$ of pairwise disjoint open sets such that $\max_{U\in \mathcal{U}}\textrm{diam}(U)<\delta$ and $\mu(X\setminus \bigcup\mathcal{U})<\epsilon$ for all $\mu\in M_G(X)$. Then $\alpha: G\curvearrowright X$ has the small boundary property.
\end{proposition}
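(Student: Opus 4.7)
The plan is to fix $x\in X$ and an open neighbourhood $W\ni x$, and produce an open set $V$ with $x\in V\subset W$ and $\mu(\partial V)=0$ for every $\mu\in M_G(X)$. The whole argument rests on one elementary observation about pairwise disjoint families of open sets, which I would record first: if $\mathcal{U}$ is any pairwise disjoint collection of open subsets of $X$ and $\mathcal{S}\subset\mathcal{U}$, then $\partial(\bigcup\mathcal{S})\subset X\setminus\bigcup\mathcal{U}$. Indeed, two disjoint open sets $U,U'$ satisfy $U'\cap\overline{U}=\emptyset$ (any point of $U'\cap\overline{U}$ has $U'$ as a neighbourhood meeting $U$), and if $y\in\partial(\bigcup\mathcal{S})$ lay in some $U^{\ast}\in\mathcal{U}$, then either $U^{\ast}\in\mathcal{S}$ (forcing $y\in\bigcup\mathcal{S}$, contradicting $y\in\partial$) or $U^{\ast}\notin\mathcal{S}$ (making $U^{\ast}$ a neighbourhood of $y$ disjoint from $\bigcup\mathcal{S}$, contradicting $y\in\overline{\bigcup\mathcal{S}}$). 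This yields the uniform bound $\mu(\partial(\bigcup\mathcal{S}))\leq\mu(X\setminus\bigcup\mathcal{U})$.

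Choose $r>0$ with $B(x,2r)\subset W$. For each $n\geq 1$, apply the hypothesis with $\delta_{n}=1/n$ and $\epsilon_{n}=2^{-n}$ to obtain a pairwise disjoint open family $\mathcal{U}_{n}$ with diameters less than $1/n$ and with $\mu(E_{n})<2^{-n}$ for every $\mu\in M_G(X)$, where $E_{n}:=X\setminus\bigcup\mathcal{U}_{n}$. Set $E:=\limsup_{n}E_{n}=\bigcap_{k}\bigcup_{n\geq k}E_{n}$. Since $\sum 2^{-n}<\infty$, the Borel--Cantelli lemma (applied to each $\mu$ separately) gives $\mu(E)=0$ for every $\mu\in M_G(X)$. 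Thus any set whose topological boundary is contained in $E$ has $\mu$-null boundary for every invariant measure.

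The bulk of the argument is to construct $V$ so that $x\in V\subset W$ and $\partial V\subset E$. I would set $V_{n}=\bigcup\{U\in\mathcal{U}_{n}:U\subset B(x,r)\}$; diameters less than $1/n$ ensure $V_{n}\subset B(x,r)\subset W$, and by the key observation $\partial V_{n}\subset E_{n}$. To assemble these into a single $V$ with $\partial V\subset E$, I would replace the families inductively by their common refinements, $\mathcal{U}_{n+1}\rightsquigarrow\{U\cap U':U\in\mathcal{U}_{n},\,U'\in\mathcal{U}_{n+1}\}$, which remains pairwise disjoint, has diameters bounded by $\min(1/n,1/(n+1))$, and satisfies $\mu(X\setminus\bigcup)<2^{-n}+2^{-(n+1)}$, still summable. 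With this refinement in place, a natural inductive choice produces an increasing sequence $V_{1}\subset V_{2}\subset\cdots\subset B(x,r)$, each $V_{n}$ a union of $\mathcal{U}_{n}$-pieces, and I set $V=\bigcup_{n}V_{n}$. For any $y\in\partial V$, the nestedness and the structure of $V_{n}$ as a $\mathcal{U}_{n}$-subunion force $y\in\partial V_{n}\subset E_{n}$ for all sufficiently large $n$, hence $y\in E$.

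Two points will require care. First, to guarantee $x\in V$ one needs $x\in\bigcup\mathcal{U}_{n}$ eventually; when $x$ happens to fall in the junk, one may enlarge $\mathcal{U}_{n}$ by adjoining a small open neighbourhood of $x$ disjoint from $\overline{\bigcup\mathcal{U}_{n}}$ (possible after shrinking the at most countably many pieces that accumulate at $x$), which preserves all hypotheses at the cost of a bounded inflation of $\epsilon_{n}$. Second, and more delicate, is verifying the containment $\partial V\subset E$ in the limit---this is what the refinement step is designed to enable, by making the position of each point relative to the later $V_{n}$'s determined by its position relative to the earlier ones. I expect this limit-of-boundaries control to be the main technical obstacle; once established, the conclusion $\mu(\partial V)\leq\mu(E)=0$ holds uniformly over $M_G(X)$, completing the proof.
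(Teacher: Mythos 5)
Your opening observation (that $\partial(\bigcup\mathcal{S})\subset X\setminus\bigcup\mathcal{U}$ for any subfamily $\mathcal{S}$ of a pairwise disjoint open family $\mathcal{U}$) is correct, and it is the same device the paper uses. But the proof has a genuine gap at exactly the point you flag: the containment $\partial V\subset E$ is never established, and the construction as described cannot deliver it. Two concrete problems. First, the refinement step destroys the summability needed for Borel--Cantelli: if the families are refined cumulatively (which is what is required for $\mathcal{U}_{n+1}$ to genuinely refine $\mathcal{U}_n$ and for the nestedness argument to have any chance), the junk set of the $n$-th refined family becomes $\bigcup_{i\le n}E_i$, whose measure is only bounded by $\sum_{i\le n}2^{-i}$; these bounds do not tend to $0$, the refined junk sets increase with $n$, and the $\limsup$ of an increasing sequence of sets is just their union, whose measure can be as large as $\sum_n 2^{-n}$ rather than $0$. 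Second, even granting nested families, the sets $V_n=\bigcup\{U\in\mathcal{U}_n:U\subset B(x,r)\}$ are \emph{not} increasing (refining a piece $U\subset B(x,r)$ loses the junk inside $U$), and if you force monotonicity by setting $V_{n+1}=V_n\cup(\cdots)$ then $V_{n+1}$ is no longer a union of $\mathcal{U}_{n+1}$-pieces and you lose $\partial V_{n+1}\subset E_{n+1}$. Worse, a point $y$ near $\partial B(x,r)$ can lie, for every $n$, in a $\mathcal{U}_n$-piece straddling the sphere (hence $y\notin E_n$ for any $n$) while still being a limit of points of $V$ contributed by ever finer pieces inside the ball; so $\partial V\subset\limsup_nE_n$ fails for this construction.

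The paper circumvents precisely this obstacle by not asking $\partial V$ to sit inside a single fixed null set. It first proves a one-step claim: for closed $F$ and open $O$ with $F\subset O$ and any $\epsilon>0$ there is an open $V$ with $F\subset V\subset\overline{V}\subset O$ and $\mu(\partial V)<2\epsilon$ for all $\mu\in M_G(X)$. This uses your disjointness observation for the finitely many pieces meeting $F$, plus Lemma 3.2 (resting on weak*-compactness of $M_G(X)$ and upper semicontinuity of $\mu\mapsto\mu(C)$ for closed $C$) to cover the part of $F$ lying in the junk by an open set of uniformly small measure. It then iterates, producing $U_1\subset U_2\subset\cdots$ together with shrinking open collars $Z_k\supset\partial U_k$ satisfying $\mu(\overline{Z_k})<1/k$ and $\overline{U_k}\subset U_{k+1}\subset U_k\cup Z_k$; the sandwich $U_k\subset U\subset U_k\cup Z_k$ for $U=\bigcup_k U_k$ then gives $\partial U\subset\overline{Z_k}$ for every $k$, hence $\mu(\partial U)=0$. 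To rescue your argument you would need some analogue of these collars; the Borel--Cantelli device by itself does not control the boundary of the increasing union.
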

\begin{proof}
Fix a metric $d$ on the space $X$. We firstly claim that given $F\subset O$ where $F$ is closed and $O$ is open, for every $\epsilon>0$ there is an open neighbourhood $V$ of $F$ such that $F\subset V\subset \overline{V}\subset O$ and $\mu(\partial V)<\epsilon$.

To show this claim firstly observe that $l=d(F, O^c)>0$, which implies that $F\subset\overline{B}(F, l/2)\subset B(F, l)\subset O$ where $\overline{B}(F, l/2)$ is defined to the set $\{x\in X: d(x, F)\leq l/2\}$ while $B(F, l)$ is defined to be the set $\{x\in X: d(x, F)<l\}$. Now, for the number $l/2$ and a given positive number $\epsilon>0$ one can find a collection $\mathcal{U}$ of pairwise disjoint open sets such that $\max_{U\in \mathcal{U}}\textrm{diam}(U)<l/2$ and $\mu(X\setminus \bigcup\mathcal{U})<\epsilon$ for all $\mu\in M_G(X)$.

Now define $K=F\setminus\bigcup\mathcal{U}\subset X\setminus\bigcup\mathcal{U}$ which entails that $\mu(K)<\epsilon$ for all $\mu\in M_G(X)$. Then Lemma 3.2 implies that there is an open subset $M$ such that $K\subset M\subset \overline{M}\subset O$ such that $\mu(\overline{M})<\epsilon$ for all $\mu\in M_G(X)$. Now consider $\{U\in \mathcal{U}: F\cap U\neq \emptyset\}\cup \{M\}$ form an open cover of $F$ and thus has a finite subcover, say, $\{U_1,\dots, U_n, M\}$ by compactness.  For each $i=1,\dots, n$ since $\operatorname{diam}(U_i)<l/2$ and $U_i\cap F\neq \emptyset$, one has $U_i\subset B(F,l/2)$ and thus $\overline{U_i}\subset \overline{B}(F, l/2)\subset B(F,l)\subset O$. Now define $V=(\bigsqcup_{i=1}^n U_i)\cup M$, which satisfies that $F\subset V\subset \overline{V}\subset O$.

In addition, consider $\partial V\subset \bigcup_{i=1}^n \partial U_i\cup \partial M$. Since the family $\mathcal{U}$ is disjoint, each $\partial U_i\subset X\setminus \bigcup\mathcal{U}$ and thus $\bigcup_{i=1}^n \partial U_i\subset X\setminus \bigcup\mathcal{U}$. Combining with the fact $\partial M\subset \overline{M}$, one has $\mu(\partial V)<2\epsilon$ for all $\mu\in M_G(X)$. This completes the claim.

Now, let $x\in O$ where $x\in X$ and $O$ is an open subset of $X$. Then we proceed by induction to construct
sequences $x\in U_1\subset U_2\subset\dots\subset O$ and $O\supset Z_1\supset Z_2\supset\dots $ such that $\partial U_n\subset Z_n$
and $\mu(\overline{Z_n})<1/n$ for all $\mu\in M_G(X)$ and $n\in \mathbb{N}^+$.  Firstly, the claim above allows us to choose an open neighbourhood $U_1$ of $x$ such that $x\in U_1\subset
\overline{U_1}\subset O$ such that $\mu(\partial U_1)<1$ for all $\mu\in M_G(X)$. Then apply Lemma 3.2 to
$\partial U_1$ to obtain an open neighbourhood $Z_1$ of $\partial U_1$ such that $\overline{Z_1}\subset O$ and $\mu(\overline{Z_1})<1$ for all $\mu\in M_G(X)$. Suppose that we
have constructed $U_1\subset U_2\subset \dots \subset U_k\subset O$ and  $O\supset Z_1\supset Z_2\supset\dots
\supset Z_k$ such that $\partial U_n\subset Z_n$ and $\mu(\overline{Z_n})<1/n$ for all $\mu\in M_G(X)$ and $n=1,\dots, k$.
Now we define $U_{k+1}$ and $Z_{k+1}$. Apply the claim above to $\overline{U_k}\subset U_k\cup Z_k$ then there is an open subset $U_{k+1}$ such that $\overline{U_k}\subset U_{k+1}\subset \overline{U_{k+1}}\subset U_k\cup
Z_k\subset O$ with $\mu(\partial U_{k+1})<1/(k+1)$ for all $\mu\in M_G(X)$. Observe that $Z_k$ is an open
neighbourhood of $\partial U_{k+1}$. Then by Lemma 3.2 again there is an open subset $Z_{k+1}$ such that $\partial
U_{k+1}\subset Z_{k+1}\subset \overline{Z_{k+1}}\subset Z_k$ and $\mu(\overline{Z_{k+1}})<1/(k+1)$ for all
$\mu\in M_G(X)$. This finishes our construction.

Now define $U=\bigcup_{n=1}^\infty U_n$. Then $x\in U\subset O$. In addition, our construction implies that $U_k\subset U\subset U_k\cup Z_k$ for each $k\in \mathbb{N}^+$. Therefore one has
\[\partial U=\overline{U}\setminus U\subset \overline{U_k\cup Z_k}\setminus U_k\subset \overline{Z_k}\]
for each $k\in \mathbb{N}^+$. This entails that $\mu(\partial U)=0$ for all $\mu\in M_G(X)$. This verifies the small boundary property.
\end{proof}

We remark that the converse of the proposition above is also true. But the direction in the proposition above is good enough for our purpose to prove Theorem 1.1:
\begin{proof}(Theorem 1.1)
Let $\alpha: G\curvearrowright X$ be a minimal free action. We revisit the proof of Theorem 3.4. Given a finite set $F\subset G$, $\epsilon>0$ and $\delta>0$, if $E_G(X)$ is compact and zero-dimensional, then the process allows us to construct a finite open castle $\{(T_i, V_i): i\in I\}$ such that
\begin{enumerate}[label=(\roman*)]
\item $T_i$ is $(F,\delta)$-invariant for every $i\in I$;

\item $\textrm{diam}(tV_i)<\delta$ for all $t\in T_i$ and all $i\in I$;

\item $\mu(X\setminus\bigsqcup_{i\in I}T_iV_i)<\epsilon$ for all $\mu\in E_G(X)$ (i.e. inequality ($\star$) ).
\end{enumerate}
Then, the same argument as in the proof of Theorem 3.4, together with Lemma 3.2 and the portmanteau theorem, imply that:

(iii')\ \ \  $\mu(X\setminus\bigsqcup_{i\in I}T_iV_i)<\epsilon$ for all $\mu\in M_G(X).$

At last, Proposition 3.8 implies that  $\alpha: G\curvearrowright X$ has the small boundary property.
\end{proof}

We close this section by remarking that the property that requires the existence of castles satisfying properties (i), (ii) and (iii') is called ``almost finiteness in measure'' and was introduced in ongoing work of Kerr and Szab\'{o} in which it is proved
that a minimal free action $\alpha: G\curvearrowright X$ has the small boundary property if and only if it is almost finite in measure.

\section{Crossed product $C^\ast$-algebras arising from almost finite actions}
In this section, we explore some properties of the crossed products arising from a minimal free almost finite action $\alpha: G\curvearrowright X$. The following is one of the most important justifications, from the $C^\ast$-algebra perspective, for the study of the dynamical analogue of the Toms-Winter conjecture.

\begin{theorem}\emph{(\cite[Theorem 12.4]{D})}
Let $\alpha: G\curvearrowright X$ be a minimal free action. If $\alpha$ is almost finite, then the crossed product $C(X)\rtimes_rG$ is $\mathcal{Z}$-stable.
\end{theorem}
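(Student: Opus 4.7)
The plan is to construct, for each $n\in \mathbb{N}$, an approximately central c.p.c.\ order zero map $\varphi_n: M_n \to B$ where $B = C(X)\rtimes_r G$, such that $1-\varphi_n(1)$ is dominated (in the Cuntz sense) by a small sub-unit of $\varphi_n(M_n)$, and then to invoke a known criterion for $\mathcal{Z}$-stability such as that of Hirshberg--Orovitz (or equivalently, property (SI) combined with uniform tracial matrix absorption in the style of Matui--Sato). The castles supplied by almost finiteness are the natural source of such order zero maps.

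First, given $n\in \mathbb{N}$, a finite set $K\subset G$ and $\delta>0$, I would apply the definition of almost finiteness to obtain a castle $\{(S_i, \overline{V_i}): i\in I\}$ with $S_i$ being $(K,\delta)$-invariant, $\operatorname{diam}(s\overline{V_i})<\delta$, the sets $S_iV_i$ pairwise disjoint (since $m=0$), and $X\setminus \bigsqcup_i S_iV_i \prec \bigsqcup_i S_i'V_i$ with $|S_i'|\leq |S_i|/n$. For each tower I would choose a positive contraction $f_i\in C_0(V_i)$ with $f_i$ close to $1_{V_i}$, and then use the canonical unitaries $u_s\in B$ together with the disjointness of the level sets $\{sV_i:s\in S_i\}$ to build approximate matrix units giving an embedding of $M_{|S_i|}$ into a hereditary subalgebra of $B$ supported near $S_iV_i$. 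Combining these embeddings across $i$, after refining the shapes to a common size (or padding down to the single block $M_n$), yields a c.p.c.\ order zero map $\varphi_n: M_n\to B$ whose unit image $\varphi_n(1)$ is supported essentially on $\bigsqcup_i S_iV_i$.

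The next step is to verify the two needed properties. Almost centrality with respect to elements of $C(X)$ follows from the uniform smallness of $\operatorname{diam}(sV_i)<\delta$, while approximate commutation with $u_g$ for $g\in K$ follows from the $(K,\delta)$-invariance of each $S_i$: left translation by $g$ preserves all but a small proportion of each shape, so conjugation of the matrix-unit picture by $u_g$ produces only a small error. The Cuntz-domination of the complement translates, via the standard correspondence between open-set dynamical subequivalence in $X$ and Cuntz subequivalence in $B$, into the statement that $1-\varphi_n(1)$ is Cuntz-subequivalent to an element in a hereditary subalgebra consisting of a $\tfrac{1}{n}$-fraction of $\varphi_n(M_n)$.

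Passing to the central sequence algebra $B_\infty\cap B'$, the sequences $(\varphi_n)$ and the Cuntz-domination estimates assemble into the hypotheses of the Hirshberg--Orovitz criterion (\emph{any} positive element in $B_\infty\cap B'$ can, after multiplication by $\varphi_n(1)$, be almost absorbed into an order zero matrix image), from which $\mathcal{Z}$-stability of $B$ follows. The principal obstacle I expect is organizational rather than conceptual: one must carefully assemble towers of potentially very different shapes into a single order zero map with one fixed domain $M_n$, and control the error estimates uniformly over the index set $I$ while simultaneously playing off the $(K,\delta)$-invariance against the diameter bound. Once that bookkeeping is arranged, the hypotheses of the $\mathcal{Z}$-stability criterion hold essentially by construction.
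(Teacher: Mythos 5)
A point of orientation first: the paper does not prove this statement at all --- it is quoted verbatim from Kerr \cite{D}, Theorem 12.4 --- so the relevant comparison is with the proof in that source. Your strategy (extract c.p.c.\ order zero maps $M_n\to C(X)\rtimes_r G$ from the castles furnished by almost finiteness and feed them into the Hirshberg--Orovitz criterion of tracial $\mathcal{Z}$-absorption, which applies because the crossed product is simple, separable, unital and nuclear) is exactly the route taken there, and your identification of which hypothesis does what --- small diameters for commutation with $C(X)$, F\o lner invariance of the shapes for the unitaries, condition (iv) of almost finiteness for the smallness of $1-\varphi(1)$ --- is correct in spirit.

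Two steps are genuinely incomplete as written. First, the Hirshberg--Orovitz criterion demands $1-\varphi(1)\precsim a$ for an \emph{arbitrary} nonzero positive $a$ in the crossed product, whereas your construction only delivers $1-\varphi(1)\precsim$ (a $\tfrac1n$-fraction of the tower image). Without strict comparison already in hand, domination by a small piece of $\varphi(M_n)$ does not by itself yield domination by $a$. Bridging this requires two further ingredients you do not mention: a lemma (using freeness and the conditional expectation onto $C(X)$) producing a nonzero positive $h\in C(X)$ with $h\precsim a$, and the implication ``almost finiteness $\Rightarrow$ dynamical comparison'' (Theorem 9.2 of \cite{D}), so that the leftover set --- of measure at most $\tfrac1n$ for every invariant measure --- can be compared against the support of $h$ once $n$ exceeds $\bigl(\inf_{\mu\in M_G(X)}\mu(\{h>0\})\bigr)^{-1}$, this infimum being positive by minimality and weak* compactness of $M_G(X)$. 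Second, your centrality estimate for $u_g$ is too optimistic: the error $u_g\varphi(e_{kl})u_g^*-\varphi(e_{kl})$ is (essentially) a sum of partial isometries with pairwise orthogonal sources and pairwise orthogonal ranges, so its norm is the \emph{maximum} of the norms of the summands, not something controlled by how few of them there are. Knowing that $g$ translates all but a $\delta$-proportion of each shape into itself therefore gives no norm bound by itself; one must additionally taper the tower functions in the group direction (or use an equivalent device) so that each boundary summand is individually small in norm. Both gaps are fixable and both are addressed in \cite{D}, but as stated your argument does not close.
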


We observe that any crossed product $C^\ast$-algebra $A=C(X)\rtimes_rG$ arising from a minimal action $\alpha: G\curvearrowright X$ is stably finite since $\tau(a)=\int_X E(a)\,d\mu$ is a faithful tracial state on $A$, where $\mu$ is an invariant probability measure on $X$ (such a $G$-invariant probability measure always exists since the group $G$ is assumed to be amenable) and $E$ is the canonical faithful conditional expectation from $A$ to $C(X)$. Therefore, if the action $\alpha$ is also free and almost finite, then $A=C(X)\rtimes_rG$  has stable rank one by Theorem 4.1 above and Theorem 6.7 in \cite{R}. We remark that both Kerr \cite{D} and Suzuki \cite{Suzuki} generalize the notion ``almost finiteness'' from \cite{Matui}. Both generalizations coincide with the original one if the space $X$ is the Cantor set.  They differ in general since ``almost finiteness'' in \cite{Suzuki} does not necessarily imply $\mathcal{Z}$-stability.

Compared with stable rank, it is much harder to determine the real rank as well as the tracial rank of a $C^\ast$-algebra arising from minimal free almost finite actions of an infinite amenable group. The following result is due to R{\o}rdam.

\begin{theorem}\emph{(\cite[Theorem 7.2]{R})}
The following conditions are equivalent for each unital, simple, exact,  finite, $\mathcal{Z}$-absorbing $C^\ast$-algebra $A$.

\emph{(i)} $rr(A)=0$;

\emph{(ii)} $\rho(K_0(A))$ is uniformly dense in $\emph{Aff}(T(A))$\newline
where $\rho$ is defined by $\rho(g)(\tau)=K_0(\tau)(g)$.
\end{theorem}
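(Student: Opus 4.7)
The plan is to address the two implications in order. For (i)$\Rightarrow$(ii), the real rank zero hypothesis says that every self-adjoint element of $A$ is a norm limit of self-adjoint elements with finite spectrum, that is, of real linear combinations $\sum_i \lambda_i p_i$ of pairwise orthogonal projections. Applying the evaluation map $A_{sa}\to \operatorname{Aff}(T(A))$, $a\mapsto \hat a$, and using that this map has uniformly dense range (which for a simple unital separable nuclear $A$ with $T(A)\neq\emptyset$ follows from a Cuntz--Pedersen type argument such as Lemma 2.5 of the paper), one obtains that the rational linear span of $\{\hat p : p\in A\text{ a projection}\}$ is uniformly dense in $\operatorname{Aff}(T(A))$. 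Since $\rho(K_0(A))$ is a subgroup of $\operatorname{Aff}(T(A))$ containing every $\hat p = \rho([p])$, a standard clearing-denominators approximation upgrades density of the rational span to density of $\rho(K_0(A))$ itself.

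For (ii)$\Rightarrow$(i), fix $a=a^*\in A$ and $\epsilon>0$; the goal is to produce $b=b^*\in A$ with finite spectrum satisfying $\|a-b\|<\epsilon$. I would begin by partitioning the spectrum of $a$ into intervals $I_1,\dots,I_N$ of diameter less than $\epsilon$, producing via functional calculus mutually orthogonal positive elements $e_i=\chi_{I_i}(a)$ with $\sum_i e_i=1_A$, and choosing $\lambda_i\in I_i$, so that the simple-function surrogate $a':=\sum_i \lambda_i e_i$ satisfies $\|a-a'\|<\epsilon$. Each $e_i$ defines a continuous affine function $\tau\mapsto \tau(e_i)$ on $T(A)$, and hypothesis (ii) together with strict comparison, available here through R\o rdam's theorem that $\mathcal{Z}$-stability implies strict comparison in the simple unital exact stably finite setting, allows one to approximate these functions arbitrarily well by elements $\rho([q_i])$ with $[q_i]\in K_0(A)$ summing to $[1_A]$.

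The decisive step, and the one I expect to be the principal obstacle, is promoting this tracial approximation by $K_0$-classes to honest \emph{pairwise orthogonal} projections $p_1,\dots,p_N\in A$ with $\sum_i p_i=1_A$ whose tracial values match those of the $e_i$ closely enough that $\|a'-\sum_i\lambda_i p_i\|<\epsilon$. This is precisely where $\mathcal{Z}$-absorption is indispensable: identifying $A$ with $A\otimes\mathcal{Z}$, one exploits the divisibility built into $\mathcal{Z}$ (via its dimension-drop building blocks and central sequences of almost-projections with prescribed trace) to manufacture projections in $A$ realizing the prescribed $K_0$-classes, and then arranges orthogonality by inductively cutting down into complementary corners. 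Strict comparison then turns the tracial matching into a norm estimate after cutting by the spectral cut-offs $\chi_{I_i}(a)$ of $a$. Assembling, the resulting $b=\sum_i\lambda_i p_i$ has finite spectrum and lies within $2\epsilon$ of $a$, so $\epsilon$ being arbitrary yields $rr(A)=0$.
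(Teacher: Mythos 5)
First, a point of order: the paper offers no proof of this statement. It is quoted verbatim as Theorem 7.2 of R{\o}rdam's paper \cite{R} and is used there as a black box, so there is no internal argument to compare yours against; your proposal has to be judged as a reconstruction of R{\o}rdam's proof.

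As such it has genuine gaps in both directions. For (i)$\Rightarrow$(ii), the step ``a standard clearing-denominators approximation upgrades density of the rational span to density of $\rho(K_0(A))$ itself'' is precisely where the content lies, and as stated it fails: a subgroup of a Banach space can have dense rational span without being dense ($\mathbb{Z}\subset\mathbb{R}$). Real rank zero gives you that the real linear span of $\{\rho([p])\}$ is dense; to pass to the group $\rho(K_0(A))$ you must be able to divide projection classes by $n$, at least approximately and uniformly over $T(A)$, and this is where $\mathcal{Z}$-absorption actually does its work --- via almost divisibility of the Cuntz semigroup combined with real rank zero, or equivalently via the fact that $K_0(A)$ is then a simple weakly unperforated Riesz group, for which density of the image in $\operatorname{Aff}$ of the state space is a Goodearl--Handelman type theorem. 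Your sketch never isolates this divisibility input. For (ii)$\Rightarrow$(i), the argument is circular at its first move: the spectral projections $\chi_{I_i}(a)$ do not lie in $A$ unless the endpoints of the intervals $I_i$ miss the spectrum of $a$, and producing elements of $A$ that play this role is exactly what real rank zero asserts. R{\o}rdam's route avoids this by working with the positive elements $(a-t)_+$ and their Cuntz classes: density of $\rho(K_0(A))$ lets one interpolate projections $p_t$ with $d_\tau((a-t-\delta)_+)\le\tau(p_t)\le d_\tau((a-t)_+)$, strict comparison (which does hold in this setting) converts these tracial inequalities into Cuntz subequivalences, and a separate lemma assembles the resulting interlacing projections into a finite-spectrum approximant of $a$. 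Your closing paragraph correctly identifies the orthogonalization of projections as the principal obstacle but does not supply the mechanism that overcomes it. Finally, a small hypothesis mismatch: the theorem assumes only exactness, not nuclearity or separability, so Lemma 2.5 of the paper is not available; the density of the image of $A_{sa}$ in $\operatorname{Aff}(T(A))$ should instead be taken from the Cuntz--Pedersen theorem directly.
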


A crossed product $C^\ast$-algebra $A=C(X)\rtimes_rG$ arising from minimal free almost finite actions of an infinite
amenable group certainly satisfies the assumption of the theorem above. However, it is generally very difficult
to verify whether $A$ satisfies condition (ii) in the theorem above. Known examples are the irrational
rotation algebras, which are included in a collection of more general examples constructed by  Lin and Phillips
in \cite{L-P}. Note that every irrational rotation on $\mathbb{T}$ is indeed almost finite by Theorem 1.2 since
it is  uniquely ergodic and has dynamical comparison (see \cite{B}). It is worth mentioning that the result of Lin and Phillips
in fact recovers the Elliott-Evans Theorem \cite{E-E} stating that every irrational rotation algebra is an
A$\mathbb{T}$-algebra with real rank zero. On the other hand, if the space $X$ is the Cantor set, Phillips \cite{Phillips} worked on almost AF Cantor groupoids and proved that the crossed product arising from a minimal free action $\mathbb{Z}^d \curvearrowright X$ has real rank zero.  Suzuki \cite{Suzuki} then generalized the result of Phillips by a different approach by proving the following theorem in \cite{Suzuki}.

\begin{theorem}\emph{(\cite[Remark 4.3]{Suzuki})}
Let $\alpha: G\curvearrowright X$ where $X$ is the Cantor set. If $\alpha$ is almost finite, then  the crossed product $C(X)\rtimes_r G$ has real rank zero.

\end{theorem}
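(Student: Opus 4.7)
The plan is to invoke R{\o}rdam's Theorem 4.2, which reduces real rank zero (given the structural hypotheses on $A:=C(X)\rtimes_r G$) to verifying that $\rho(K_0(A))$ is uniformly dense in $\operatorname{Aff}(T(A))$. The structural hypotheses are routine to check: unitality from compactness of $X$; simplicity from minimality and freeness of $\alpha$; exactness from nuclearity of $A$ (as $G$ is amenable); stable finiteness from the existence of a faithful tracial state (amenability yields an invariant probability measure); and $\mathcal{Z}$-absorption from Theorem 4.1 applied to the almost finite action. Identifying $T(A)$ with $M_G(X)$ as in Section 2, the density problem becomes: approximate uniformly each continuous affine function on $M_G(X)$ by functions $\tau_\mu\mapsto\rho(\xi)(\tau_\mu)$ with $\xi\in K_0(A)$.

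My first step would be to reduce to a concrete combinatorial problem. By Hahn--Banach, any continuous affine function on the convex set $M_G(X)$ extends to a weak-$*$ continuous affine function on $M(X)\subset C(X)^*$, which has the form $\mu\mapsto\int g\,d\mu+c$ for some $g\in C(X)_{sa}$ and $c\in\mathbb{R}$. Absorbing the constant into multiples of $[1_A]\in K_0(A)$ and approximating $g$ uniformly by a step function $\sum_{j=1}^N c_j 1_{W_j}$ with $W_j\subset X$ clopen (possible since $X$ is Cantor), then approximating each $c_j$ by a rational with common denominator $q$, the task reduces to approximating
\begin{equation*}
\mu\mapsto \tfrac{1}{q}\sum_{j=1}^N p_j\,\mu(W_j)\qquad (p_j\in\mathbb{Z},\ q\in\mathbb{Z}_{>0},\ W_j\text{ clopen})
\end{equation*}
uniformly over $\mu\in M_G(X)$ by elements of $\rho(K_0(A))$.

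Next I would exploit almost finiteness to build the approximating projection. For a sufficiently large $n$, a sufficiently large finite $K\subset G$, and small $\delta>0$, let $\{(S_i,\overline{V_i}):i\in I\}$ be the castle furnished by almost finiteness. Using total disconnectedness of $X$, each base can be taken clopen, and by further refinement each $V_i$ splits into clopen pieces $\{V_{i,k}\}_k$ so fine that every level $sV_{i,k}$ (for $s\in S_i$) is either contained in $W_j$ or disjoint from it, for every $j=1,\dots,N$. Setting $n_{i,k,j}=|\{s\in S_i:sV_{i,k}\subset W_j\}|$ and using $G$-invariance of $\mu$, one obtains
\begin{equation*}
\mu(W_j)=\sum_{i,k}n_{i,k,j}\,\mu(V_{i,k})+\eta_{j,\mu},\qquad |\eta_{j,\mu}|\leq\mu\!\Bigl(X\setminus\bigsqcup_i S_iV_i\Bigr).
\end{equation*}
Setting $m_{i,k}=\sum_j p_j\,n_{i,k,j}\in\mathbb{Z}$ and writing $m_{i,k}=qM_{i,k}+R_{i,k}$ with $0\leq R_{i,k}<q$, the class $\sum_{i,k}M_{i,k}[1_{V_{i,k}}]\in K_0(A)$ has $\rho$-image differing from the target function by at most
\begin{equation*}
\tfrac{1}{q}\sum_{i,k}R_{i,k}\,\mu(V_{i,k})+\tfrac{1}{q}\sum_j|p_j|\cdot\mu\!\Bigl(X\setminus\bigsqcup_i S_iV_i\Bigr).
\end{equation*}
Since $\sum_k\mu(V_{i,k})=\mu(S_iV_i)/|S_i|\leq 1/|S_i|$, the first sum is bounded by $(q-1)/(q\min_i|S_i|)$, while the second is controlled by condition (iv) of almost finiteness. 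Both quantities can be made arbitrarily small uniformly in $\mu$ by choosing the parameter $n$ large.

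The main obstacle is the joint orchestration of parameters: the clopen refinement of the $V_i$ must be fine enough to resolve all of the finitely many $W_j$ simultaneously, while the shapes $S_i$ must remain large and $(K,\delta)$-invariant, and the residual $X\setminus\bigsqcup_i S_iV_i$ must be of uniformly small measure across \emph{all} $\mu\in M_G(X)$, not merely across the extreme traces. This uniformity over the entire simplex $T(A)$ is precisely what condition (iv) in the definition of almost finiteness supplies, and it is what allows the $K_0$-approximation to be uniform in $\operatorname{Aff}(T(A))$.
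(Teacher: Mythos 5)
The paper offers no proof of this statement at all: it is quoted verbatim from Suzuki's Remark 4.3, whose argument is a direct one for general almost finite \'{e}tale groupoids with totally disconnected unit space and does not pass through $\mathcal{Z}$-stability. Your route --- R{\o}rdam's criterion (Theorem 4.2) plus a castle-based approximation of $\operatorname{Aff}(T(A))$ by $\rho(K_0(A))$ --- is therefore a genuinely different one, and its quantitative core is sound: the identity $\mu(W_j)=\sum_{i,k}n_{i,k,j}\,\mu(V_{i,k})+\eta_{j,\mu}$ follows from $G$-invariance, the bound $\sum_i\mu(V_i)\leq 1/\min_i|S_i|$ holds because the sets $S_iV_i$ are pairwise disjoint (a step worth writing out, since $\sum_i 1/|S_i|$ alone is not small), condition (iv) does give $\mu(X\setminus\bigsqcup_iS_iV_i)\leq 1/n$ uniformly over all of $M_G(X)$, and both error terms tend to zero as $n$ and $\min_i|S_i|$ grow, the latter being forced by $(K,\delta)$-invariance for large $K$. (Two cosmetic points: the constant $c$ is most easily absorbed into $g$ as $g+c1_X$ rather than into integer multiples of $[1_A]$, and a continuous affine function on $M_G(X)$ need not extend exactly to one on $C(X)^*$, though such restrictions are uniformly dense, which suffices.)

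There are two genuine gaps. First, your argument silently assumes minimality: R{\o}rdam's Theorem 4.2 requires $A$ to be simple, and Theorem 4.1, your source of $\mathcal{Z}$-absorption, is stated for minimal free actions. The theorem as stated --- and as it is applied immediately afterwards to arbitrary almost finite actions of abelian groups on the Cantor set --- carries no minimality hypothesis, so your proof establishes only a special case of it; this is precisely where Suzuki's groupoid argument is stronger. Second, ``each base can be taken clopen'' needs care: shrinking $V_i$ to a clopen subset can destroy the estimate $\mu(X\setminus\bigsqcup_iS_iV_i)\leq 1/n$, because an open subset of the Cantor set is in general only a countable union of clopen sets and cannot be exhausted by one clopen set up to measure small uniformly over $M_G(X)$. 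The correct move is to enlarge rather than shrink: the levels $s\overline{V_i}$ form a pairwise disjoint family of compact sets, so total disconnectedness yields pairwise disjoint clopen sets $D_{i,s}\supset s\overline{V_i}$, and $C_i=\bigcap_{s\in S_i}s^{-1}D_{i,s}$ defines a clopen castle with $C_i\supset V_i$, which only improves the measure bound (the diameter condition may be lost, but your argument never uses it). With the second point repaired and the scope restricted to minimal free actions, the proof is correct.
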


Suzuki \cite{Suzuki} also proved that $\alpha: G\curvearrowright X$ is almost finite if $G$ is abelian and  $X$ is the Cantor set. Then, as an application of Theorem 4.3,  $C(X)\rtimes_r G$ has real rank zero if $G$ is abelian and $X$ is the Cantor set.

We close this section by finishing proving Theorem 1.2 and Corollary 1.3. The following remarkable classification theorem comes from a combination of works of Elliott-Gong-Lin-Niu \cite{E-G-L-N}, Gong-Lin-Niu \cite{G-L-N} and  Tikuisis-White-Winter \cite{Ti-W-W}.

\begin{theorem}
The class $\mathfrak{C}$ of infinite-dimensional stably finite simple separable unital $C^\ast$-algebras satisfying the UCT and having finite nuclear dimension is classified by the Elliott invariant.
\end{theorem}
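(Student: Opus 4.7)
The plan is to establish Theorem 4.4 by assembling the three classification-theoretic inputs indicated in the attribution, namely the works of Elliott-Gong-Lin-Niu \cite{E-G-L-N}, Gong-Lin-Niu \cite{G-L-N} and Tikuisis-White-Winter \cite{Ti-W-W}. The central observation is that the axiomatic hypotheses defining $\mathfrak{C}$ (simple, separable, unital, nuclear, stably finite, UCT, finite nuclear dimension) are precisely those under which the recently completed classification programme applies, so the task reduces to bridging from these abstract regularity hypotheses to the structural input required by the classification machinery.

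First, I would invoke Winter's theorem that for simple, separable, unital, nuclear, infinite-dimensional, stably finite $C^\ast$-algebras, finite nuclear dimension implies $\mathcal{Z}$-stability; this already yields strict comparison and a host of regularity consequences at the level of the Cuntz semigroup. Second, I would apply the Tikuisis-White-Winter theorem \cite{Ti-W-W}: under UCT and nuclearity, every faithful trace on $A$ is quasidiagonal. This is the step where the UCT hypothesis does essential work, since it is what allows one to pass from $KK$-theoretic information back to concrete quasidiagonality of traces. Third, with $\mathcal{Z}$-stability plus quasidiagonality of all traces in hand, I would invoke the Elliott-Gong-Lin-Niu / Gong-Lin-Niu classification theorem, which classifies simple, separable, unital, nuclear, $\mathcal{Z}$-stable $C^\ast$-algebras satisfying the UCT and having quasidiagonal trace space (equivalently, those of rational generalized tracial rank at most one) by the Elliott invariant.

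The main obstacle, and the reason Theorem 4.4 genuinely requires a combination of three separate papers rather than any single one, is the passage from the abstract regularity hypothesis of finite nuclear dimension to the inductive-limit / tracial-rank structural input needed by the classification machinery of Gong, Lin and Niu. The Tikuisis-White-Winter quasidiagonality theorem is precisely what closes this gap under UCT; without it, one would have to impose quasidiagonality of traces as an additional hypothesis, as in earlier versions of the classification theorem. Once this obstacle is resolved, the existence and uniqueness theorems built into the Elliott intertwining formalism of \cite{E-G-L-N} and \cite{G-L-N} produce the isomorphism witnessing that the Elliott invariant is a complete invariant on $\mathfrak{C}$.
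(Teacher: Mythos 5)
Your proposal is correct and follows exactly the route the paper intends: the paper states Theorem 4.4 without proof, attributing it to the combination of \cite{E-G-L-N}, \cite{G-L-N} and \cite{Ti-W-W}, and your decomposition (finite nuclear dimension $\Rightarrow$ $\mathcal{Z}$-stability via Winter, UCT $\Rightarrow$ quasidiagonality of traces via Tikuisis--White--Winter, then the Gong--Lin--Niu / Elliott--Gong--Lin--Niu classification of $\mathcal{Z}$-stable algebras with quasidiagonal traces) is the standard and correct way these three inputs assemble.
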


This leads us to the completion of the proof of Theorem 1.2 and Corollary 1.3.

\begin{proof}(Theorem 1.2)
Since $E_G(X)$ is compact and zero-dimensional, Theorem 3.5 and Theorem 4.1 imply that the crossed product $A=C(X)\rtimes_r G$ is $\mathcal{Z}$-stable. Then the result in \cite{B-B-S-T-W-W} shows that $A$ has finite nuclear dimension because the extreme tracial states form a compact set in the weak*-topology. In addition, $C(X)\rtimes_r G$ is isomorphic to a $C^\ast$-algebra of a Hausdorff,
locally compact, second countable amenable transformation groupoid and thus satisfies UCT by a result of Tu \cite{Tu}. Then the crossed product $A=C(X)\rtimes_r G$ belongs to the class $\mathfrak{C}$, which is classified by the Elliott invariant by Theorem 4.4 above.
\end{proof}

\begin{proof}(Corollary 1.3)
Suppose $E_{\mathbb{Z}}(X)$ is compact and zero-dimensional, then $\alpha: \mathbb{Z}\curvearrowright X$ has the small boundary property by Theorem 1.1. Then \cite{E-N} implies that $A=C(X)\rtimes_r \mathbb{Z}$ is $\mathcal{Z}$-stable and therefore $A$ has finite nuclear dimension. In addition, the result of Tu \cite{Tu} shows that $A$ satisfying UCT as mentioned above. Then the crossed product $A=C(X)\rtimes_r \mathbb{Z}$ belongs to the class $\mathfrak{C}$, which is classified by the Elliott invariant by Theorem 4.4 above.
\end{proof}

\section{Acknowledgement}
The author should like to thank his supervisor David Kerr for a lot of very inspiring suggestions, helpful discussions and corrections. He thanks G\'{a}bor Szab\'{o} for letting him know Proposition 3.8. He also thanks Yuhei Suzuki for pointing out an incorrect citation occurred in the first version. Finally, he should like to thank the anonymous referee whose comments and suggestions helped to greatly improve the paper.

\end{document}